\def\tx{{\tilde{x}}}
\def\ty{{\tilde{y}}}
\def\tz{{\tilde{z}}}
\def\tI{{\tilde{I}}}
\def\btz{{\bm{ \tilde{z}}}}
\def\btx{{\bm{ \tilde{x}}}}
\def\bty{{\bm{ \tilde{y}}}}
\def\x{{\bm x}}
\def\eps{{\bm \epsilon}}
\def\y{{\bm y}}
\def\z{{\bm z}}
\def\w{{\bm w}}
\def\b{{\bm b}}
\def\s{{\bm s}}
\def\o{{\bm o}}
\def\a{{\bm a}}
\def\O{O}
\def\0{{\bm 0}}
\def\bto{{\bm{\tilde{o}}}}
\def\to{\tilde{o}}
\def\A{{\mathcal{A}}}
\def\rank{{\text{rank}}}
\def\supp{{\text{supp}}}
\def\card{{\text{card}}}
\newcommand{\skal}[1]{\langle #1 \rangle}
\newcommand{\reg}{\ensuremath{\mathcal{R}}}
\newtheorem{theorem}{Theorem}[section]
\newtheorem{lemma}[theorem]{Lemma}
\newtheorem{corollary}[theorem]{Corollary}
\title{Relaxations for Non-Separable Cardinality/Rank Penalties}
\date{}
\author{Carl Olsson$^{1,2}$ \hspace{5mm} Daniele Gerosa$^{1}$ \hspace{5mm} Marcus Carlsson$^{1}$ \\
	\begin{minipage}[c]{0.4\textwidth}
		\centering
		${}^1$Centre for Mathematical Sciences,\\
		Lund University
	\end{minipage}
	\begin{minipage}[c]{0.4\textwidth}
	\centering
	${}^2$Department of Electrical Engineering,\\
	Chalmers University of Technology
\end{minipage}
}
\DeclareMathOperator*{\argmin}{arg\,min}
\begin{document}

\maketitle

\begin{abstract}
Rank and cardinality penalties are hard to handle in optimization frameworks due to non-convexity and discontinuity. Strong approximations have been a subject of intense study and numerous formulations have been proposed. Most of these can be described as separable, meaning that they apply a penalty to each element (or singular value) based on size, without considering the joint distribution.
In this paper we present a class of non-separable penalties and give a recipe for computing strong relaxations suitable for optimization. In our analysis of this formulation we first give conditions that ensure that the globally optimal solution of the relaxation is the same as that of the original (unrelaxed) objective. We then show how a stationary point can be guaranteed to be unique under the RIP assumption (despite non-convexity of the framework).
\end{abstract}

\section{Introduction}
Sparsity and low rank priors are common ways of regularizing ill-posed inverse problems.
In the computer vision community they have been employed in a wide variety of applications such as
outlier detection/removal, face recognition, rigid and non rigid structure from motion, photometric stereo and optical flow  \cite{wright-etal-pami-2009,olsson-etal-cvpr-2010,tomasi-kanade-ijcv-1992,bregler-etal-cvpr-2000,yan-pollefeys-pami-2008,garg-etal-cvpr-2013,basri-etal-ijcv-2007,garg-etal-ijcv-2013}. 
The prior is typically formulated either as a soft penalty, resulting in a trade-off between data fit and regularization, or as a hard constraint enforcing a particular cardinality/rank.
In this paper we formulate the general sparsity regularized problem as
\begin{equation}
G(\card(\x)) + \|A \x - \b\|^2,
\label{eq:vectprobl}
\end{equation}
where the function $G$ can be written as
\begin{equation}
G(k) = \sum_{i=0}^k g_i, 
\end{equation}
where $0= g_0 \leq g_1 \leq g_2 \leq ... \leq g_n \leq \infty$. Note that $g_i=\infty$ is allowed (if $i>0$). This formulation covers both hard and soft priors. If we for example chose $G(k) = \mu k$ we get an objective that penalizes but does not restrict the sparsity of the solution. In contrast, if we let $G(k) = 0$ when $k \leq r$ and $\infty$ when $k > r$ we get a hard cardinality constraint. Many other choices for $G$ are possible. 
In this paper we will also consider the corresponding matrix version of \eqref{eq:vectprobl}, formulated as
\begin{equation}
G(\rank(X)) + \|\A X - \b\|^2,
\label{eq:matrixprobl}
\end{equation}
where $\A$ is a linear operator.
The theory for the vector and sparsity formulations are with a few exceptions very similar, since the rank of a matrix is basically a sparsity prior on the singular values of the matrix. We will therefore state our main results in the vector setting but emphasize that they apply for the matrix setting as well.

In general $G$ is convex and non-decreasing on the non-negative integers. However as function of the unknown $\x$, $G(\card(\x))$ is highly non-convex as well as discontinuous and 
in general these problems are NP-hard \cite{natarajan1995sparse,gillis-glineur-siam-2011}.
Therefore relaxations have to be employed.
In recent years there have been a lot of work on convex as well as non-convex relaxations for both sparsity and rank regularized problems. The standard method is to replace $\card(\x)$ with the convex $\ell_1$ norm $\|\x\|_1$ \cite{tropp-2015,tropp-2006,candes-tao-2006,donoho-elad-2002}. 
Furthermore, if the RIP constraint
\begin{equation}
(1-\delta_r)\|\x\|^2 \leq \|A \x\|^2 \leq (1+\delta_r)\|\x\|^2,
\label{eq:vectorRIP}
\end{equation}
holds for all vectors $\x$ with $\card(\x) \leq r$, asymptotic performance guarantees can be derived \cite{candes-tao-2006}.
On the other hand the $\ell_1$ approach suffers from a shrinking bias
since it penalizes both small elements of $\x$, assumed to stem from measurement noise, and large elements, assumed to make up the true signal, equally. Hence the suppression of noise also requires an equal suppression of signal \cite{fan2001variable,mazumder2011sparsenet}.
This insight has lead to a large number of non-convex alternatives able to penalize small components proportionally harder than the large ones e.g. \cite{fan2001variable,mazumder2011sparsenet,bredies2015minimization,blumensath2009iterative,chartrand2007exact,pan2015relaxed,zou2008one,loh2013regularized,zhang2012general,zhang2010nearly,loh2017support}. 
With a few exceptions e.g. \cite{loh2013regularized,loh-wainwright-arxiv-2014} global optimality guarantees are generally not available for these formulation. 
In addition these typically employ separable formulations, that is, a non-convex penalty is applied to each element without regarding the joint element values. Such a formulation can for example not add hard thresholds on the number of non-zero elements of the vector.
It is important to note that under RIP the matrix $A$ typically has a nullspace containing dense vectors. Under such conditions, separable formulations that don't have shrinking bias, often have local minimizers, see Section~\ref{sec:motivation}. Hence non-separable $G$ able to strongly penalize high cardinality solutions is likely to provide better relaxations. On the other hand these are harder to analyse and less common in the literature.
The k-support norm studied in \cite{argyriou-etal-nips-2012,mcdonald-etal-jmlr-2016} is a non-separable surrogate for the rank function. It is however a convex norm and therefore suffers from a shrinking bias similar to the $\ell_1$ norm.

The theory of rank minimization is largely analogous that of sparsity.
In this context the rank function is typically replaced with the convex nuclear norm $\|X\|_* = \sum_i \sigma_i(X)$ \cite{recht-etal-siam-2010,candes-etal-acm-2011}.
In \cite{recht-etal-siam-2010} the notion of RIP was generalized to the matrix setting.
A number of generalizations that give performance guarantees for the matrix case have appeared \cite{oymak2011simplified,candes-etal-acm-2011,candes2009exact} and non-convex alternatives have also been considered \cite{oh-etal-pami-2016,oymak-etal-2015,mohan2010iterative,iglesias2020accurate}.
The analogue of the k-support norm was considered in \cite{eriksson-etal-cvpr-2015,grussler-etal-tac-2018}.  
The so called weighted nuclear norm is a popular choice for vision problems \cite{hu-etal-pami-2013,gu-2016,kumar-arxiv-2019}.
We are however not aware of any global recovery guarantees with this regularizer.
(Note that even though the weighted nuclear norm is linear in the singular value vector it is not a separable nor convex penalty since the singular values are non-linear functions of the matrix elements.) 
In this work we study the class of non-convex non-separable relaxations of the objective described in \eqref{eq:vectprobl} and \eqref{eq:matrixprobl}. The relaxation is obtained when replacing the regularizer with its quadratic envelope \cite{carlsson2018convex}. If $f(x) = G(\card(\x))+\|\x\|^2$ then the quadratic envelope $\reg_g$ can be defined by
\begin{equation}
\reg_g(\x) = f^{**}(\x) - \|\x\|^2,
\end{equation} 
where $f^{**}(\x)$ is the convex envelope of $f(\x)$. Thus we first add a quadratic penalty to the regularizer, then take the convex envelope and subtract the quadratic function. 
The intuition behind the choice of regularizer is that $\reg_g(\x)+\|\x-\b\|^2$ is the convex envelope of $G(\card(\x))+\|\x-\b\|^2$, see \cite{larsson-olsson-ijcv-2016}, and therefore any stationary point is a global minimizer. Under RIP the term $\|\x-\b\|^2$ is likely to behave similarly to $\|A\x-\b\|^2$ for vectors with $\card(\x) < r$. In this paper we formally study the properties of stationary points of the resulting minimizers and give conditions that guarantee global optimality of a stationary point. Note that our work exclusively deals with properties of the objective function and do not assume any particular optimization method. Any method that reaches a stationary point or a local minimum will suffice. The theory presented in this paper unifies and makes significant extensions of the results in \cite{olsson-etal-iccv-2017,olsson2017} where two special cases of the framework are studied.

\subsection{Relaxations}

In this section we give a very brief presentation of the regularizer that we use, which is taken from \cite{larsson-olsson-ijcv-2016}.
The function $G(\card(\x))$ only depends on the sorted magnitudes of the elements in $\x$, and this is also the case for $\reg_g(\x)$. We will denote these by $\btx$ and assume that $\x = D_s \pi \btx$. 
Here the vector $\s$ contains elements that are either $-1$ or $1$, $D_\s$ is a diagonal matrix with the elements of $\s$ on the diagonal and $\pi$ is a permutation matrix.
With this notation our regularizer can be written
\begin{equation}
\reg_g(\x) = \max_\btz 2\skal{\btx,\btz} - \sum_{i=1}^n \max(\tz_i^2 - g_i,0)-\|\btx\|^2,
\label{eq:zmax}
\end{equation}
see \eqref{eq:zmax} in the supplementary material for further details.
Evaluation of $\reg_g$ requires solving the maximization over $\btz$.
This can be done very fast (logarithmic time in the number of elements of $\x$). A simple (linear time) algorithm is presented in \cite{larsson-olsson-ijcv-2016}. For completeness we also present the main theory in the supplementary material (Appendix~\ref{app:convenv}).

The relaxation of \eqref{eq:vectprobl} can then be written
\begin{equation}
\reg_g (\btx) + \|A \x - \b\|^2.
\label{eq:vecrelax}
\end{equation}
The theory for the matrix case is largely identical to that of the vector case. Here the regularizer only depends on the (sorted) singular values of $X$ which we will also denote $\btx$.  The relationship between $X$ and $\btx$ is now the singular value decomposition (SVD) $X = U D_\btx V^T$, where $U$ and $V$ are orthogonal matrices. 
We will therefore write the relaxation of \eqref{eq:matrixprobl}
\begin{equation}
\reg_g (\btx) + \|\A X - \b\|^2.
\label{eq:matrelax}
\end{equation}
Note that the orthogonal matrices $U$ and $V$ are not unique if $\btx$ has elements that are zero. In the vector case we have a similar non-uniqueness in the matrix $D_\s \pi$. 
\section{Motivation}~\label{sec:motivation}

Being able to use a general prior $G$ has the benefit that we can design accurate formulations for finding the correct cardinality. While separable regularization considers the size of each variable separately the ability to apply a non-separable prior makes it possible to heavily penalize unlikely solutions. The extreme example, the so called fixed cardinality penalty
\begin{equation}
G(k) = \begin{cases}
0 & k \leq k_{\max} \\
\infty & k > k_{\max}
\end{cases}, 
\end{equation}
rules out solutions with more non-zero elements than $k_{\max}$, which cannot be achieved with a separable formulation. Less restrictive variants that regard high cardinality states unlikely (but not impossible) can also be used.

The use of a non-separable prior is not only important for modelling purposes but also effects optimization algorithms since separable formulations more often get stuck in local minima.
To understand this consider the simple one dimensional problem $|x|_0 + (x-y)^2$, where $|x|_0 = 0$ if $x=0$ and $1$ otherwise. The goal of this formulation is recover $x=y$ if $y$ is large enough not to be considered noise. By taking derivatives it is easy to see that the solution to this problem is given by $x=y$ if $|y|\geq 1$ and $x=0$ otherwise. Now suppose that $|x|_0$ is replaced by a function $f(|x|)$.
It is easy to see that the minimizer of $f(|x|)+(x-y)^2$ is either $x=0$ or a stationary point fulfilling $x = y-\frac{f'(|x|)}{2}$. Hence to really recover $x=y$ when $y$ is large enough we have to have $f'(|x|)=0$, that is, $f$ has to be constant for large values. If this is not the case $f$ will favour smaller solutions resulting in a shrinking bias. 

On the other hand a separable regularizer of the form $F(\x) = \sum_i f(x_i)$, where $f$ is constant for large values is likely to have local minima even under RIP. A 2D example depicting the shape of the regularizer is shown in Figure~\ref{fig:regex}. To the left is $\reg_g$ with $g_1 = g_2 = 1$ which is separable and obtained by applying the function to the right in Figure~\ref{fig:regex} to both coordinates.
For general dimensions these types of penalties yield regularizers that are constant for dense vectors that are large enough.  
Suppose now that $\x^*$ minimizes $\|A\x + \b\|^2$, that is $\x^*$ is a least squares solution, and there is a dense vector $\y$ in the nullspace of $A$. Then $\x^*+\lambda \y$ is also a least squares solution. If we make $\lambda$ large enough the vector $\x^*+\lambda \y$ will be located in the region where the separable relaxation is constant while at the same time minimizing the data term and therefore it is a local minimizer of the relaxation (as well as the original unrelaxed formulation). In \cite{olsson-etal-iccv-2017} conditions that guarantee uniqueness of sparse stationary points under the regularization $\reg_g$ and $g_1 = g_2 = ... =g_n$ when RIP holds were given. However, dense stationary points could not be ruled out. In our experimental section we confirm that optimization starting from a least squares solution often results in convergence to poor dense solutions, see Section~\ref{sec:experiments}. 
One way to address this problem is to accept a modest shrinking bias to make sure that the gradient of the regularizer does not vanish for large elements \cite{olsson-etal-arxiv-2018}. 
For the type of local minima described above this is likely to solve the problem, however it is not clear if there are other types of dense minima as well. 
In this work we instead consider non-separable formulations where dense vectors can be penalized harder. In the middle of Figure~\ref{fig:regex} we show $\reg_g$ with $g_1 = 1$ and $g_2 = \infty$. In the latter case the $G$ function excludes dense vectors. This is reflected in the shape of the relaxation $\reg_g$, which will clearly try to discourage cardinality 2. For vectors of cardinality $1$ (and $0$) the two options (Left and Middle) provide identical penalties. Our main results give conditions that are sufficient for guaranteeing uniqueness of stationary points among both sparse and dense vectors.
\begin{figure}[htb]
	\begin{center}
		\includegraphics[width=50mm]{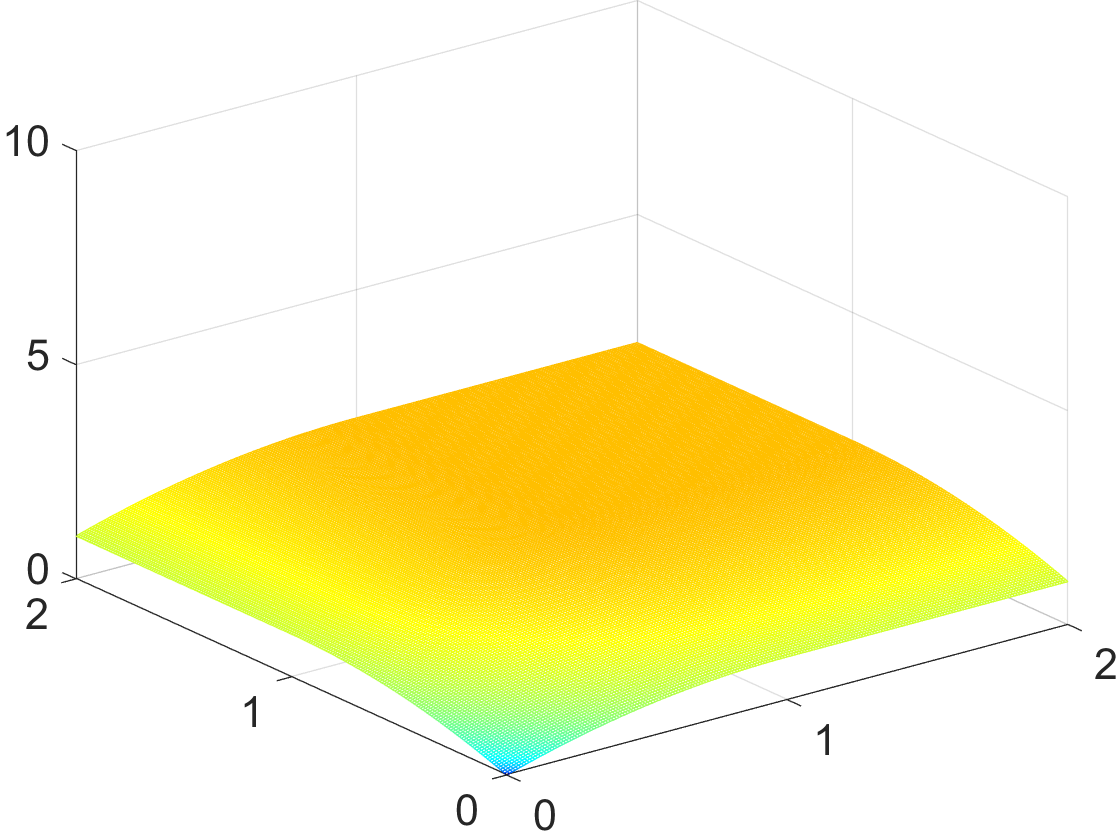}
		\includegraphics[width=50mm]{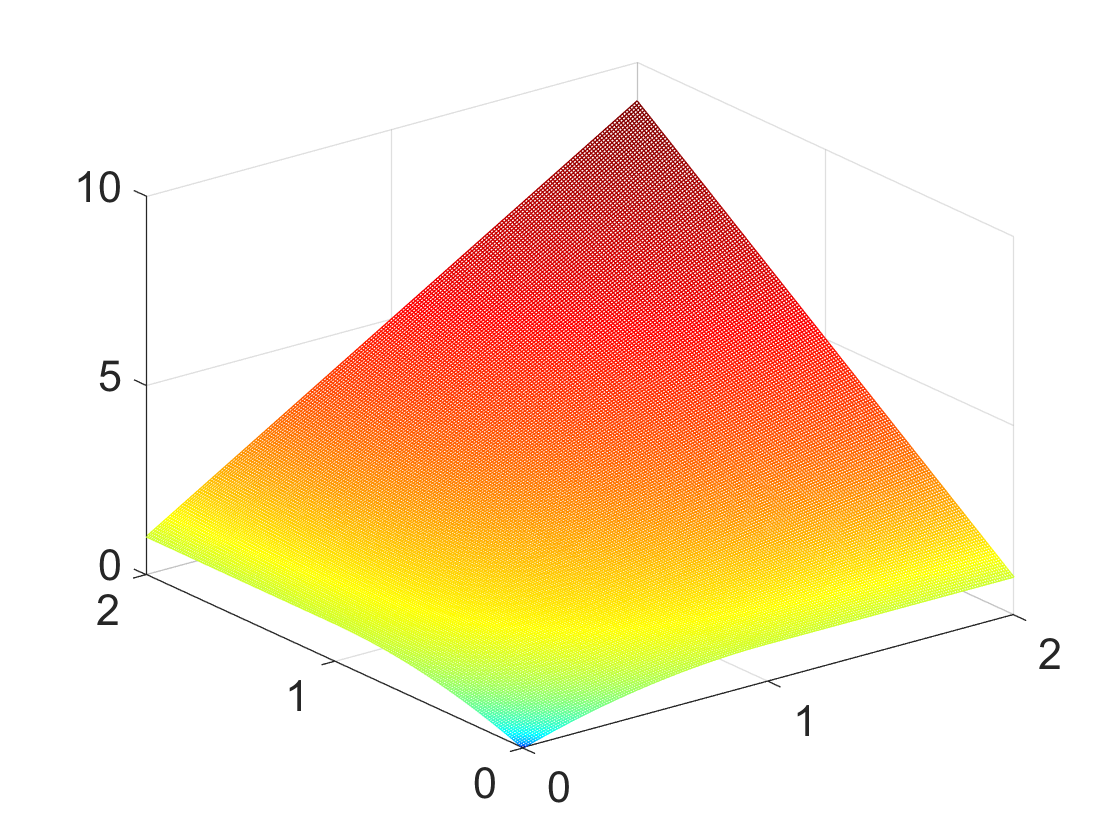}
		\includegraphics[width=50mm]{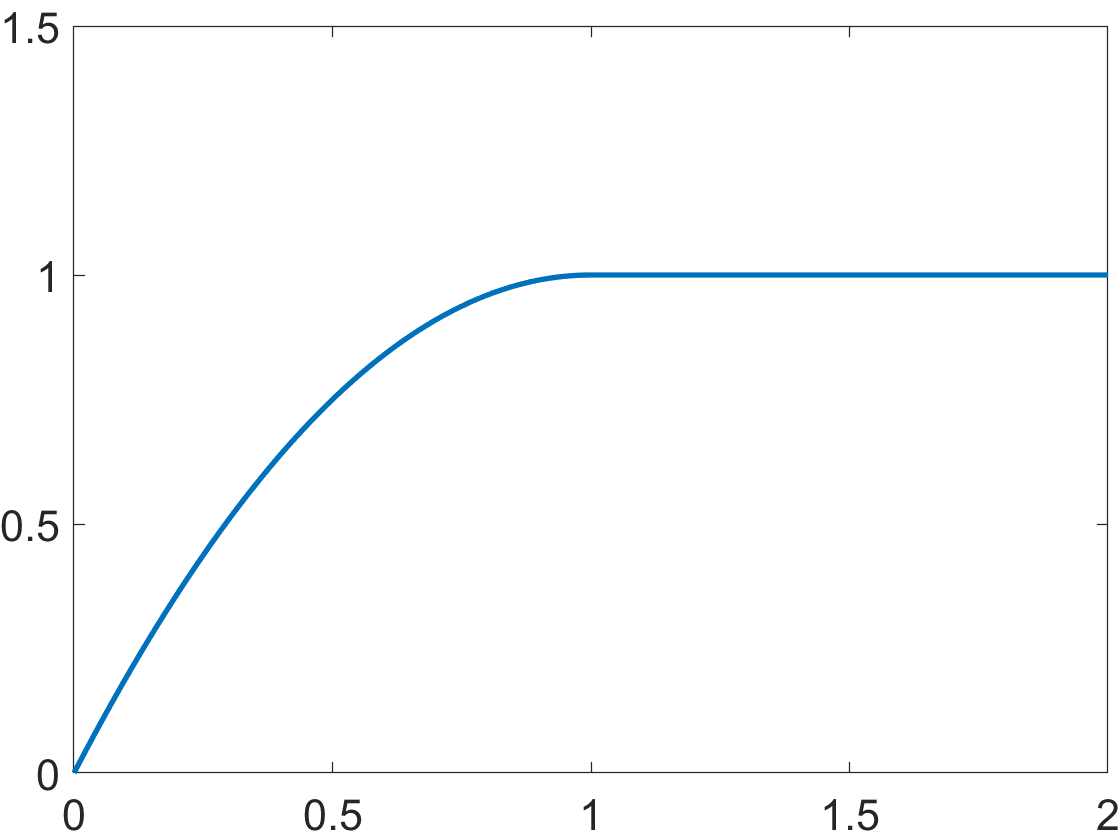}
	\end{center}
\caption{Illustrations of $\reg_g(\x)$. Left: $g_1=g_2=1$. Middle: $g_1 = 1$, $g_2=\infty$. Right: The shape of both the objectives on one of the axes.}
\label{fig:regex}
\end{figure}
\section{Theoretical Results}
In this section we will present our main theoretical results for the relaxations~\eqref{eq:vecrelax} and~\eqref{eq:matrelax}. We will state our results in terms of the vector case \eqref{eq:vecrelax}. However identical results hold for the matrix case when the magnitudes of the vector elements are replaced by the singular values of the matrix.

\subsection{What is our relaxation solving?}

The goal of our regularization is to adaptively select the appropriate rank/cardinality given the data fidelity.
If the true cardinality had been known it would be desirable solve the so called "fixed cardinality" problem
\begin{equation}
\o = \argmin_{\card(\x)\leq k} \|A\x - \b\|^2.
 \label{eq:fixedcard]}
\end{equation}
Hence we would like our formulation to determine $k$ and then to solve \eqref{eq:fixedcard]} exactly.
While many regularization methods have been proposed very few of them output solutions that are "fixed cardinality" minimizers for some $k$. As discussed in the previous section most of them add a bias that clearly favors small solutions.

The following theorem gives conditions that ensure that a particular "fixed cardinality" solution is stationary for our relaxation provided that the noise is bounded. 

\begin{theorem}\label{thm:fixedpoint}
	If $\o$ solves \eqref{eq:fixedcard]} and $\to_k > \sqrt{g_k}$ then $\o$ is a stationary point \eqref{eq:vecrelax} if
	\begin{equation}
	\|\eps\| \leq \frac{\min\{\sqrt{g_{k+1}},\to_k\}}{\|A\|},
	\end{equation}
where $\eps = A\o - b$ are the residual errors.
\end{theorem}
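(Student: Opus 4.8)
The plan is to verify the first‑order stationarity condition for \eqref{eq:vecrelax} directly. Since $\reg_g(\x)=f^{**}(\x)-\|\x\|^2$, where $f(\x)=G(\card(\x))+\|\x\|^2$ and $f^{**}$ is convex, the relaxation objective is a convex function plus a smooth one, so $\o$ is a stationary point of \eqref{eq:vecrelax} if and only if $0\in\partial f^{**}(\o)-2\o+2A^{T}\eps$, i.e.\ if and only if $\w:=2(\o-A^{T}\eps)$ lies in $\partial f^{**}(\o)$. The whole argument thus reduces to exhibiting $\w$ as a subgradient of $f^{**}$ at $\o$.

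First I would extract what optimality of $\o$ in \eqref{eq:fixedcard]} buys us. Because $\to_k>\sqrt{g_k}\ge 0$, necessarily $\card(\o)=k$; set $S=\supp(\o)$. Perturbing $\o$ within its own support shows $\o$ is the ordinary least–squares solution restricted to the columns indexed by $S$, so $(A^{T}\eps)_i=0$ for every $i\in S$. Hence $\w_i=2o_i$ on $S$ and $\w_i=-2(A^{T}\eps)_i$ off $S$.

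Next I would use the variational formula \eqref{eq:zmax}, $f^{**}(\x)=\max_{\btz}2\skal{\btx,\btz}-\sum_i\max(\tz_i^{2}-g_i,0)$. As recalled in Appendix~\ref{app:convenv}, whenever $\btz^{\star}$ attains this maximum for $\btx=\bto$, the vector $2\btz^{\star}$ transported into the same sign/permutation frame as $\o$ is a subgradient of $f^{**}$ at $\o$. Since $\bto$ has nonnegative, non‑increasing entries and the penalty is separable, the maximization decouples coordinatewise: the maximizer has $\tz_i^{\star}=\to_i$ when $\to_i>\sqrt{g_i}$ and $\tz_i^{\star}$ arbitrary in $[-\sqrt{g_i},\sqrt{g_i}]$ when $\to_i=0$. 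I would then take for $\btz^{\star}$ the sorted magnitudes of $\o-A^{T}\eps$: its $k$ largest entries are $\to_1,\dots,\to_k$, each satisfying $\to_i\ge\to_k>\sqrt{g_k}\ge\sqrt{g_i}$ and hence equal to the forced maximizer value, while its remaining entries are the sorted values $|(A^{T}\eps)_i|$, $i\notin S$. For $\btz^{\star}$ to be a legitimate maximizer I need the off‑support magnitudes (a) to lie in $[0,\sqrt{g_{k+1}}]\subseteq[0,\sqrt{g_j}]$ for $j\ge k+1$, so they fall in the unconstrained regime, and (b) to be $\le\to_k$, so $\to_1,\dots,\to_k$ genuinely occupy the top $k$ positions after sorting. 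Both follow from $\max_{i\notin S}|(A^{T}\eps)_i|\le\|A^{T}\eps\|\le\|A\|\,\|\eps\|\le\min\{\sqrt{g_{k+1}},\to_k\}$, which is exactly the hypothesis. Finally, exploiting the non‑uniqueness of $(\s,\pi)$ noted after \eqref{eq:matrelax} — the zero entries of $\bto$ leave both the ordering of the $S^{c}$–coordinates in $\pi$ and their signs in $\s$ free — I would pick $\s,\pi$ so that $D_{\s}\pi\btz^{\star}=\o-A^{T}\eps$, whence $\w=2D_{\s}\pi\btz^{\star}\in\partial f^{**}(\o)$ and $\o$ is stationary.

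The step I expect to be the main obstacle is this last alignment: keeping the permutation/sign bookkeeping exact and making the appeal to the non‑uniqueness of the sorted representation precise, including the tie case $\max_{i\notin S}|(A^{T}\eps)_i|=\to_k$. The coordinatewise reduction of the $\btz$–maximization and the estimate $\|A^{T}\eps\|\le\|A\|\,\|\eps\|$ are routine, and the matrix version goes through verbatim once $\bto$ is read as the singular‑value vector of the optimal $X$.
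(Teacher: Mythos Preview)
Your proof is correct and follows essentially the same approach as the paper: both reduce stationarity to the inclusion $2(\o - A^T\eps)\in\partial f^{**}(\o)$, use the normal equations on $\supp(\o)$ to show $A^T\eps$ vanishes there, and then invoke the description of $\partial f^{**}$ at points where $f^{**}=f$ (the paper's Lemma~\ref{lemma:knownz}) together with the crude bound $\|A^T\eps\|\le\|A\|\,\|\eps\|$ to handle the off-support coordinates. One caveat on your closing remark: the matrix case does not go through ``verbatim''---the paper needs a separate argument exploiting the freedom in the orthogonal complements $\bar U_\perp,\bar V_\perp$ of the SVD to diagonalize the off-range block of $\A^*(\A O-\b)$, which replaces your sign/permutation alignment step and is a bit more delicate.
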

The proof of this result and its matrix analog is given in the supplementary material (Appendix \ref{app:fixedpointproof}).

There are two essential constraints that ensure that a fixed carnality solution $\o$ is stationary. 
Firstly, since $\to_i$ is decreasing and $\sqrt{g_i}$ is increasing with $i$ we can view the constraint $\to_k > \sqrt{g_k}$ as a threshold $\sqrt{g_i}$ which must be smaller than any non-zero element. 
The second constraint essentially states the remaining residual error that is not explained by $\o$ has to be sufficiently small. 
As a simple example we mention the noise free case $\b = A \y$ with $\card(\y)=k$. Here $\eps=0$ and therefore $\x=\y$ would be a stationary point for all choices of $g$ where the non-zero elements of $\y$ are larger than $\sqrt{g_{k}}$.

The above result does not rule out the existence of multiple stationary points. The main results of our paper are dedicated to developing conditions that ensure uniqueness of a stationary point for appropriate choices of $g$. In such cases we therefore obtain a method that is able to jointly determine the best $k$ and supply us with the corresponding "fixed cardinality" solution.

\subsection{Element Separation and Optimality}

The main result of this section will show that a sparse stationary point is under certain conditions unique.
The conditions are related to the noise and whether there is a clear truncation of the data or not.

Consider the stationary points of \eqref{eq:vecrelax}. Since $f^{**}(\x) = \reg_g(\btx)+\|\btx\|^2$ we can write the objective function \eqref{eq:vectprobl} as $f^{**}(\x) + h(\x)$, where 
\begin{equation} \begin{split}
h(\x) & = \|A\x-\b\|^2 - \|\x\|^2 \\ & = \x^T (A ^T A -I)\x - 2\x^T A^T \b + \b^T \b,
\end{split} \end{equation} 
which has
$\nabla h(\x) =  -2\z$, where $\z = (I - A^T A )\x + A^T \b$.
A point is stationary if and only if $2\z = - \nabla h(\x) \in \partial f^{**}(\x)$.
Some properties of the solutions to these equations can be understood by noting that for a fixed $\z$ the exact same equations are obtained by differentiating the objective
\begin{equation}
\reg_g(\btx) + \|\x - \z\|^2 = f^{**}(\x)-2\x^T \z + \|\z\|^2.
\label{eq:localapprox}
\end{equation} 
This expression can be seen as a local approximation (ignoring constants) around $\x$, and $\x$ is stationary in \eqref{eq:vecrelax} if and only if it is stationary in \eqref{eq:localapprox}. Furthermore, \eqref{eq:localapprox} is the convex envelope of $G(\card(\x))+ \|\x-\z\|^2$. Therefore the stationary point $\x$ is the best low cardinality approximation of $\z$ which is obtained by truncating the elements $\tz_i$ at $\sqrt{g_i}$.

It is the properties of $\z$ that decide if there could be other (sparse) stationary points or not.
Loosely speaking, our theory relies on the fact that the directional derivative of $f^{**}$ grows faster than that of $-h$. Hence if they are equal at some stationary point they cannot be so again. 
This is true if the magnitudes $\tz_i$ are well separated from their thresholds. 
If this is not the case a small change in $\tz_i$ can cause $\tx_i$ to switch from $\tx_i = \tz_i$ to $\tx_i = 0$ (or vice versa) which may result in the directional derivative of $f^{**}$ not growing sufficiently fast.
Note that for noise free recovery, that is $\b = A\y$ for some vector $\y$, we have 
$\z = (I-A^T A )\y + A^TA\y = \y$. Hence as long as the non-zero elements of $\y$ are sufficiently separated form $0$ we should be able to guarantee that this is the only sparse stationary point.  

We now state the main result:
\begin{theorem}\label{thm:statpts}
Suppose that $\x$ is a stationary point of \eqref{eq:vecrelax}, that is, $2\z \in \partial f^{**}(\x)$ with $\z = (I-A^T A)\x+A^T \b$, and the matrix $A$ fulfills \eqref{eq:vectorRIP}.
If $\card(\x)=k$, $\tx_i \notin (0,\sqrt{g_i})$ and $\btz$ fulfills 
\begin{equation}
	\tz_i \notin \left[(1-\delta_r) \sqrt{g_k},  \frac{\sqrt{g_k}}{(1-\delta_r)}\right] \text{ and }  \tz_{k+1} < (1-2\delta_r) \tz_{k}, \label{eq:singvalsep1}
\end{equation}
then any other stationary point $\x'$ has $\card(\x') > r-k$.
If in addition $k < \frac{r}{2}$ then $\x$ solves
\begin{equation}
\min_{\card(\x) < \frac{r}{2}} \reg_g(\btx)+\|A \x - \b\|^2.
\label{eq:lokmin}
\end{equation}
\end{theorem}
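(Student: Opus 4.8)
The plan is to use that a point $\x$ is stationary for \eqref{eq:vecrelax} exactly when $2\z\in\partial f^{**}(\x)$ with $\z=(I-A^TA)\x+A^T\b$, where $f^{**}$ is \emph{convex}, and to play the monotonicity of $\partial f^{**}$ against \eqref{eq:vectorRIP}. Let $\x'$ be any other stationary point, put $S=\supp(\x)$, $S'=\supp(\x')$ and $\z'=(I-A^TA)\x'+A^T\b$, so $2\z'\in\partial f^{**}(\x')$ as well. Monotonicity of the subdifferential of the convex $f^{**}$ gives $\langle\z-\z',\x-\x'\rangle\ge 0$, and since $\z-\z'=(I-A^TA)(\x-\x')$ this reads $\|\x-\x'\|^2-\|A(\x-\x')\|^2\ge 0$. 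Now suppose, for contradiction, that $\card(\x')\le r-k$; then $\x-\x'$ is $r$-sparse and supported on $T:=S\cup S'$ with $|T|\le r$, so \eqref{eq:vectorRIP} applies. On the one hand $\|A(\x-\x')\|^2\ge(1-\delta_r)\|\x-\x'\|^2$; on the other hand $\langle\z-\z',\x-\x'\rangle$ only sees the $T$-block of $(I-A^TA)(\x-\x')$, whose norm is at most $\delta_r\|\x-\x'\|$ by the standard RIP estimate. Combining, $0\le\langle\z-\z',\x-\x'\rangle\le\delta_r\|\x-\x'\|^2$. The goal is to show that in fact $\langle\z-\z',\x-\x'\rangle>\delta_r\|\x-\x'\|^2$ whenever $\x'\neq\x$, which forces $\x'=\x$, contradicting that $\x'$ is a different stationary point.

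To do that I would first read off, from the explicit description of $\partial\reg_g$ behind \eqref{eq:zmax} (and \cite{larsson-olsson-ijcv-2016}), what the inclusion $2\z\in\partial f^{**}(\x)$ means for a point with $\tx_i\notin(0,\sqrt{g_i})$. Such an $\x$ lies in the region where $f^{**}$ agrees with $G(k)+\|\cdot\|^2$ along the coordinate subspace $\reals^{S}$, so testing the subgradient inequality against vectors supported on $S$ forces $\z|_{S}=\x|_{S}$ (equivalently, $A^T(A\x-\b)|_S=0$, i.e. $\x$ is the least-squares solution on $S$), while testing it transversally forces an \emph{ordered} bound on $|\z|_{S^c}$ governed by $\sqrt{g_{k+1}},\sqrt{g_{k+2}},\dots$. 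The separation \eqref{eq:singvalsep1} then sharpens the transverse bound to $|\z_i|<(1-\delta_r)\sqrt{g_k}$ for $i\notin S$ and gives $|x_i|=|\z_i|>\sqrt{g_k}/(1-\delta_r)$ for $i\in S$. The identical analysis applied to $\x'$ yields $\z'|_{S'}=\x'|_{S'}$ together with an ordered bound on $|\z'|_{(S')^c}$.

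The crux, and the step I expect to be the main obstacle, is to turn these structural facts into the strict inequality $\langle\z-\z',\x-\x'\rangle>\delta_r\|\x-\x'\|^2$. Expanding over the three groups of indices, the $S\cap S'$ part equals $\sum_{i\in S\cap S'}(x_i-x'_i)^2\ge 0$, the $S\setminus S'$ part equals $\sum_{i\in S\setminus S'}(x_i^2-z'_ix_i)$, and the $S'\setminus S$ part equals $\sum_{i\in S'\setminus S}(x_i'^2-z_ix'_i)$; the point is that on $S\setminus S'$ the magnitudes $|x_i|$ are large ($>\sqrt{g_k}/(1-\delta_r)$) while the $z'_i$ are small (they are ``discarded'' coordinates of $\z'$), and symmetrically on $S'\setminus S$ the $z_i$ are small ($<(1-\delta_r)\sqrt{g_k}$), so each of these terms is bounded below by a factor close to $1$ times $\sum_{S\setminus S'}x_i^2$, resp. $\sum_{S'\setminus S}x_i'^2$, whereas $\|\x-\x'\|^2$ is itself controlled by these sums together with $\sum_{S\cap S'}(x_i-x'_i)^2$. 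Carrying this out honestly requires using the \emph{ordered} (non-separable) form of the subdifferential bounds rather than a crude coordinatewise estimate, and tracking $\delta_r$ through both parts of \eqref{eq:singvalsep1}; the delicate case is $\card(\x')=k$ with $S'$ a genuine competitor of $S$, which is exactly where the condition $\tz_{k+1}<(1-2\delta_r)\tz_k$ is needed — it rules out a near-tie in which a borderline coordinate of $\z$ could be ``discarded'' by $\x'$ as cheaply as it is ``kept'' by $\x$. Once this is established, the sandwich above is violated, so every other stationary point must satisfy $\card(\x')>r-k$.

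For the final assertion, assume in addition $k<r/2$. Under \eqref{eq:vectorRIP} with $r>2k$ the operator $A$ is injective on $\{\card(\x)<r/2\}$ and $\reg_g$ is bounded there, so the relaxed objective in \eqref{eq:lokmin} is coercive on this set and attains a minimizer $\x^{*}$, necessarily with $\card(\x^{*})<r/2$. Since this cardinality is \emph{strictly} below the constraint level, any descent direction of the relaxation at $\x^{*}$ can be realized without leaving $\{\card(\x)<r/2\}$ — decompose the direction into its part on $\supp(\x^{*})$ and a single new coordinate, using that the directional derivative of $f^{**}$ splits into a support part and a one-new-coordinate part — so $\x^{*}$ is a stationary point of the unconstrained relaxation \eqref{eq:vecrelax}. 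But $\card(\x^{*})<r/2\le r-k$, so by the first part $\x^{*}$ cannot be a stationary point distinct from $\x$; hence $\x^{*}=\x$, i.e. $\x$ solves \eqref{eq:lokmin}.
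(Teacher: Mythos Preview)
Your overall strategy is the same as the paper's: show that for any competing point $\x'$ with $\card(\x-\x')\le r$ one has
\[
\skal{\z'-\z,\x'-\x}>\delta_r\|\x'-\x\|^2,
\]
and then contradict the RIP bound $\skal{(I-A^TA)(\x'-\x),\x'-\x}\le \delta_r\|\x'-\x\|^2$. So the architecture is right. There are, however, two genuine gaps.

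\textbf{The treatment of $\x'$.} Your derivation of $\z'|_{S'}=\x'|_{S'}$ and of the ordered bound on $|\z'|$ off $S'$ uses the explicit description of $\partial f^{**}$ at a point where $f^{**}=f$, i.e.\ where $\tx'_i\notin(0,\sqrt{g_i})$. The theorem assumes this for $\x$ but \emph{not} for $\x'$; a competing stationary point may very well sit where $f^{**}(\x')<f(\x')$, and then the clean coordinate identities you use are simply false. The paper closes this gap with a separate lemma: if $f^{**}(\x')<f(\x')$ then $\x'=\sum_j\lambda_j\x^j$ for points with $f^{**}(\x^j)=f(\x^j)$ and moreover $\partial f^{**}(\x')\subset\bigcap_j\partial f^{**}(\x^j)$. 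One proves the strict inequality at each $\x^j$ (where your coordinate description \emph{is} valid) and then averages, using convexity of $\|\cdot-\x\|^2$. Without this reduction your argument does not go through.

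\textbf{The coordinate estimate itself.} Even at points where $f^{**}=f$, the paper first reduces to sorted nonnegative vectors (via a permutation/sign lemma) and then uses a \emph{pairing} between indices of $\tI\setminus I'$ and $I'\setminus\tI$: paired terms are controlled by $\tz_{k+1}<(1-2\delta_r)\tz_k$, while unpaired leftover terms (when $k'\neq k$) are controlled by the interval condition on $\tz_i$. Your sketch treats the three index groups separately and asserts that $z'_i$ is ``small'' on $S\setminus S'$ and $z_i$ is ``small'' on $S'\setminus S$; but when $k'>k$ the off-support bound for $\z'$ is $\min(\tx'_{k'},\sqrt{g_{k'+1}})$, which need not be below $\sqrt{g_k}$, so a purely separable estimate fails. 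The pairing is what makes the two halves of \eqref{eq:singvalsep1} fit together.

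\textbf{Second assertion.} Arguing that a minimizer $\x^*$ over $\{\card(\cdot)<r/2\}$ is an \emph{unconstrained} stationary point is shaky: that set is a closed union of subspaces, and a minimizer with maximal allowed cardinality need not be critical for the free problem. The paper sidesteps this entirely: for any $\x'$ with $\card(\x')<r/2$ one has $\card(\x'-\x)<r$, so RIP applies to the direction ${\bm v}=(\x'-\x)/\|\x'-\x\|$, and the same growth estimate shows the directional derivative of $f^{**}+h$ is strictly positive at every point $\x+t{\bm v}$, $t>0$. Hence the objective is strictly increasing along the segment, and $\x$ beats every such $\x'$ directly, with no need to argue stationarity of a constrained minimizer.
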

The proof of this theorem is given in the supplementary material (Appendix~\ref{app:main}).
To gain some more understanding of the conditions \eqref{eq:singvalsep1} we recall that the sequence $\sqrt{g_i}$ is non-decreasing, while $\btx$ and $\btz$ are non-increasing.
Therefore the first condition in \eqref{eq:singvalsep1} ensures that none of the elements $\tz_i$ are close to any of their thresholds $\sqrt{g_i}$. The second condition is to prevent a change of the permutation $\pi$, since this may result in two or more elements in $\x$ to switch from non-zero to zero and vice versa. This can happen if $\tz_k$ and $\tz_{k+1}$ are close to each other. In other cases changes in ordering does not cause changes in the support of $\x$. 

The assumption $\tx_i \notin (0,\sqrt{g_i})$ is equivalent to $f^{**}(\x) = f(\x)$, see supplementary material (Appendix~\ref{app:convenv}). 
In principle there could be stationary points where this assumption is not fulfilled if the regularizer is not strong enough to force such elements to be zero. For example, if the data term is of the form $\mu\|\x-\b\|^2$, increasing $\mu$ will eventually lead to the optimal solution being $\b$ regardless of the size of its elements.
One way to ensure that the regularization is strong enough is to require that $\|\A\| < 1$.
Then any local minimizer of \eqref{eq:vecrelax} will have $f^{**}(\x) = f(\x)$ by Theorem 4.7 in \cite{carlsson2018convex}. In addition any local minimizer of \eqref{eq:vecrelax} will be a local minimizer of \eqref{eq:vectprobl} (but not the other way around) and the global minimizers with coincide.

Before proceeding we also note that since $f^{**} \leq f$ the stationary point in Theorem~\ref{thm:statpts} will also solve
\begin{equation}
\min_{\card(\x) < \frac{r}{2}} G(\card(\x))+\|A \x - \b\|^2,
\end{equation}
if it solves \eqref{eq:lokmin}, meaning that in some sense it is the best possible sparse solution to the problem. In what follows we will therefore assume that $r=2k$.

We conclude the section by giving results that are sufficient to guarantee the existence of a stationary point fulfilling the constraints of Theorem~\ref{thm:statpts} in the presence of noise, that is, $\b=A\y+\eps$ for some clean vector $\y$. 
The following result shows that as long as the noise level is not too high there will be a stationary point fulfilling the constraints of Theorem~\ref{thm:statpts}. Moreover, this is true for a whole range of objectives as long as the thresholds $\sqrt{g}_k$ are not selected too close to the elements in $\ty$. 
Note that this result relies on worst case bounds in terms of the noise vector $\eps$.
The proof basically assumes that a single element of the stationary point $\x$ is affected by the full noise magnitude $\|\eps\|$ rather than evenly distributing the noise among the elements of $\x$. This makes the statement weaker than what can be expected in practice with for example Gaussian noise. 

\begin{theorem}\label{thm:sufficientcond}
Suppose that $\b=A\y+\eps$, for some $\y$ with $\card(\y)=k$, $\|A\| < 1$, $\delta_{2k} < \frac{1}{2}$. If
\begin{equation}
\ty_k > \frac{5}{(1-2\delta_{2k})\sqrt{1-\delta_{2k}}}\|\eps\|,
\label{eq:yconst}
\end{equation}
then \eqref{eq:vecrelax} has a stationary point $\x$, with $\card(\x)=k$, that fulfills \eqref{eq:singvalsep1}
for all choices of $G$ where 
\begin{equation}
\footnotesize
\sqrt{g_k} < (1-\delta_k)\left( \ty_k -  \frac{2\|\eps\|}{\sqrt{1-\delta_{2k}}} \right) 
\text{ and }  \sqrt{g_{k+1}} > \frac{3(1-\delta_{k})}{\sqrt{1-\delta_{2k}}}\|\eps\|.
\label{eq:gconst}
\end{equation}
\end{theorem}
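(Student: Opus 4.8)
The plan is to show that the fixed-cardinality solution $\o=\argmin_{\card(\x)\le k}\|A\x-\b\|^2$ from Theorem~\ref{thm:fixedpoint} is itself the stationary point claimed, for every admissible $G$ simultaneously. First I would record the standard RIP perturbation estimates. Since $\y$ is feasible, $\|A\o-\b\|\le\|\eps\|$; since $\o-\y$ has cardinality at most $2k$ and $A(\o-\y)=(A\o-\b)-\eps$, RIP gives $\|\o-\y\|\le 2\|\eps\|/\sqrt{1-\delta_{2k}}$; and because passing to decreasingly sorted magnitudes is $1$-Lipschitz, $|\to_i-\ty_i|\le 2\|\eps\|/\sqrt{1-\delta_{2k}}$ for every $i$. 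Thus, writing $L:=\ty_k-2\|\eps\|/\sqrt{1-\delta_{2k}}$, we get $\to_k\ge L$ and $\to_{k+1}=0$; the noise bound \eqref{eq:yconst} together with $\delta_{2k}<\tfrac{1}{2}$ makes $L$ strictly larger than $\|\eps\|$ (indeed a fixed multiple of it), so $\card(\o)=k$.

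Next I would verify the hypotheses of Theorem~\ref{thm:fixedpoint} to conclude that $\o$ is a stationary point of \eqref{eq:vecrelax}. The first inequality in \eqref{eq:gconst} gives $\sqrt{g_k}<(1-\delta_k)L\le\to_k$, so the threshold condition $\to_k>\sqrt{g_k}$ holds; the second gives $\sqrt{g_{k+1}}>\|\eps\|$, and since $\to_k\ge L>\|\eps\|\ge\|A\o-\b\|$ and $\|A\|<1$ we get $\|A\o-\b\|<\min\{\sqrt{g_{k+1}},\to_k\}\le\min\{\sqrt{g_{k+1}},\to_k\}/\|A\|$, which is exactly the noise bound in Theorem~\ref{thm:fixedpoint}. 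In passing, $\to_i\ge\to_k>\sqrt{g_k}\ge\sqrt{g_i}$ for $i\le k$ and $\to_i=0$ for $i>k$, so the hypothesis $\tx_i\notin(0,\sqrt{g_i})$ of Theorem~\ref{thm:statpts} is met as well; note that nothing here depends on $G$, so the same $\o$ serves for all $G$ in the stated range.

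The core step is to describe $\btz$ and verify \eqref{eq:singvalsep1}. With $\eps':=A\o-\b$ one has $\z=(I-A^TA)\o+A^T\b=\o-A^T\eps'$, and the normal equations of the least-squares problem on $S:=\supp(\o)$ give $A_S^T\eps'=0$, so $\z$ agrees with $\o$ on $S$ while its entries off $S$ are those of $-A^T\eps'$. Crucially, because $\|A\|<1$ these off-support entries are small: $\|A^T\eps'\|\le\|A\|\|\eps'\|\le\|A\|\|\eps\|<\|\eps\|$, whereas $\to_k\ge L>\|\eps\|$. Hence the $k$ largest magnitudes of $\z$ are exactly the magnitudes of $\o$ on $S$, i.e.\ $\tz_i=\to_i$ for $i\le k$, while $\tz_{k+1}<\|\eps\|$. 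Now both requirements in \eqref{eq:singvalsep1} follow from a short case analysis: for $i\le k$ one uses $\tz_i=\to_i\ge\to_k$ together with the upper bound on $\sqrt{g_k}$ in \eqref{eq:gconst}; for $i>k$ one uses $\tz_i\le\tz_{k+1}<\|\eps\|$ together with the lower bound on $\sqrt{g_{k+1}}$ in \eqref{eq:gconst} and the monotonicity of $\{g_i\}$; and $\tz_{k+1}<\|\eps\|<(1-2\delta_{2k})L\le(1-2\delta_{2k})\to_k=(1-2\delta_{2k})\tz_k$, the middle inequality being precisely what the factor $1/(1-2\delta_{2k})$ in \eqref{eq:yconst} delivers.

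I expect the bookkeeping in this last step to be the main obstacle: one must make the various RIP-dependent constants in \eqref{eq:singvalsep1}, \eqref{eq:gconst} and \eqref{eq:yconst} fit together, and in particular control the residual ``leakage'' $A^T\eps'$ on $S^c$ sharply enough --- using $\|A\|<1$ rather than a crude RIP cross-term estimate --- so that $\tz_{k+1}$ sits comfortably below the forbidden interval while $\tz_k=\to_k$ sits comfortably above its upper end $\sqrt{g_k}/(1-\delta_{2k})$; the role of \eqref{eq:yconst} is to guarantee enough margin. A secondary subtlety is that $\o$ is the \emph{global} fixed-cardinality minimizer, so $S$ need not equal $\supp(\y)$; but every estimate above is support-agnostic --- it relies only on $\card(\o-\y)\le2k$ and the normal equations on $S$ --- so this causes no difficulty, and the matrix version of the statement follows verbatim with sorted magnitudes replaced by singular values.
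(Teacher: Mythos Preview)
Your proposal is correct and follows essentially the same route as the paper: both take the global fixed-cardinality minimizer as the candidate stationary point, bound $\|\o-\y\|$ via RIP, and then verify \eqref{eq:singvalsep1} from the resulting estimates on $\btz$. The only notable differences are that you invoke Theorem~\ref{thm:fixedpoint} for stationarity (the paper instead argues via global optimality of the fixed-cardinality relaxation and Lemma~\ref{lemma:knownz}), and that you exploit the normal equations on $\supp(\o)$ to obtain the sharper estimate $\tz_{k+1}<\|\eps\|$, whereas the paper bounds $\|\z-\x\|\le 3\|\eps\|/\sqrt{1-\delta_{2k}}$ directly --- which is precisely why the constant $3$ appears in the second inequality of \eqref{eq:gconst}.
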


Note that the proof, which is given the supplementary material (Appendix \ref{app:main2}), shows that regardless of the choice of $G$ the stationary point is always the best cardinality $k$ approximation of $\y$ (in a least squares sense). Hence if we know the cardinality beforehand we might as well use the fixed-cardinality relaxation $g_i = 0$ if $i\leq k$ and $g_i=\infty$ if $i>k$. In many practical cases the rank is not known before hand but needs to be determined through a suitably selected $g$ function. The above estimates show that a solution that is close to the original noise free vector $\y$ (and has the correct support) can often be recovered.

\subsection{Regularizers with Hard Constraints}\label{sec:hardconst}
The theory presented in the previous section shows uniqueness of sufficiently sparse stationary points, but cannot rule out dense stationary points. The main difficulty in this respect is that the RIP constraint only gives information about low cardinality vectors, and typically there are dense vectors in the nullspace of $A$. 
As illustrated in Section~\ref{sec:motivation} unbiased separable regularizers don't penalize these vectors sufficiently. 

In this section we assume that we know an upper bound $k_{\max}$ on the cardinality. This means that $g_i = \infty$ for all $i \geq k_{\max}$. 
The next result shows that relaxations resulting from such regularizers turn out to be strong enough to exclude the existence of high cardinality local minimizers and giving global optimality of a solution fulfilling the assumptions of Theorem~\ref{thm:statpts}. Note that this prior is by construction non-separable since it counts the number of non-zero element.

\begin{corollary}
Suppose that $\x$ is a stationary point of \eqref{eq:vecrelax} that fulfills the assumptions of Theorem~\ref{thm:statpts} with $r=2k$. If $\|\A\|<1$ and $g_i = \infty$ for $i \geq k$ then $\x$ is the unique local minimizer of \eqref{eq:vecrelax} and a global minimizer of \eqref{eq:vectprobl}.
\end{corollary}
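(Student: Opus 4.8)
The plan is to leverage, in combination, the strength of the relaxation under $\|\A\|<1$, the effect of the hard constraint on cardinality, and the exclusion of competing low-cardinality stationary points already supplied by Theorem~\ref{thm:statpts}. I read the hypothesis ``$g_i=\infty$ for $i\ge k$'' as the hard cardinality constraint at level $k$, i.e.\ $g_1\le\cdots\le g_k<\infty$ while $g_{k+1}=\cdots=g_n=\infty$ (this is what is consistent with $\card(\x)=k$ and $\tx_i\notin(0,\sqrt{g_i})$). Its only role is the following two facts. First, $f(\x')=G(\card(\x'))+\|\x'\|^2=+\infty$ whenever $\card(\x')>k$. Second, $f^{**}$ (equivalently $\reg_g$) is real-valued everywhere: writing $\x=\sum_i\tfrac1n\,(n x_i e_i)$ as a convex combination of the feasible $1$-sparse vectors $n x_i e_i$ gives $f^{**}(\x)\le g_1+n\|\x\|^2<\infty$. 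Hence $f^{**}(\x')<f(\x')$ at every $\x'$ with $\card(\x')>k$.

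Next I would argue that any local minimizer $\x'$ of \eqref{eq:vecrelax} must in fact have $\card(\x')\le k$. Since the objective of \eqref{eq:vecrelax} equals $f^{**}(\x)+h(\x)$ with $f^{**}$ convex and $h$ a smooth quadratic (cf.\ the discussion preceding Theorem~\ref{thm:statpts}), basic subdifferential calculus shows that $\x'$ is a stationary point in the sense of that theorem, $2\z'\in\partial f^{**}(\x')$ with $\z'=(I-A^TA)\x'+A^T\b$; and since $\|\A\|<1$, Theorem~4.7 of \cite{carlsson2018convex} gives $f^{**}(\x')=f(\x')$, which by the previous paragraph forces $\card(\x')\le k$. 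On the other hand, $\x$ satisfies by hypothesis the assumptions of Theorem~\ref{thm:statpts} with $r=2k$, so that theorem states that every stationary point $\x'\ne\x$ satisfies $\card(\x')>r-k=k$. The two conclusions are incompatible, so $\x$ is the \emph{only} point that can be a local minimizer of \eqref{eq:vecrelax}.

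It remains to produce a global minimizer. Under the hard constraint the unrelaxed problem \eqref{eq:vectprobl} minimizes $\|A\x-\b\|^2+G(\card(\x))$ over the finite union $\bigcup_{|S|\le k}\{\x:\ \supp(\x)\subseteq S\}$ of coordinate subspaces; on each such subspace the least-squares term attains its minimum, so \eqref{eq:vectprobl} attains its global minimum, say at $\x^*$. Because $\|\A\|<1$, the global minimizers of \eqref{eq:vecrelax} and \eqref{eq:vectprobl} coincide (as noted just after Theorem~\ref{thm:statpts}), so $\x^*$ is also a global — in particular a local — minimizer of \eqref{eq:vecrelax}, whence $\x^*=\x$ by the previous paragraph. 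Thus $\x$ is the global minimizer and the unique local minimizer of \eqref{eq:vecrelax}, and, once more by the coincidence of global minimizers, $\x$ is a global minimizer of \eqref{eq:vectprobl}.

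The main obstacle I anticipate is not a sharp estimate but the bookkeeping that makes the argument airtight: verifying that $f^{**}$ is genuinely finite-valued in the presence of infinite $g_i$ (so that ``$f^{**}=f$ at a local minimizer'' truly forces $\card\le k$), and securing the existence of a global minimizer of the relaxation — which need not be coercive, and is why existence is imported from the unrelaxed problem together with the already-established coincidence of global minimizers under $\|\A\|<1$, rather than established directly.
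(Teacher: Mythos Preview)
Your argument is correct and follows the same route as the paper's proof: Theorem~4.7 of \cite{carlsson2018convex} forces any local minimizer to satisfy $f^{**}=f$ and hence to have $\card\le k$, after which Theorem~\ref{thm:statpts} with $r=2k$ rules out any other stationary point of cardinality $\le k$. You are in fact more careful than the paper, which stops at uniqueness and does not explicitly verify that $\x$ is a minimizer; your extra step of importing a global minimizer from the unrelaxed problem and invoking the coincidence of global minimizers under $\|\A\|<1$ closes that gap, and your reading of the hypothesis as $g_i=\infty$ for $i>k$ (rather than $i\ge k$) is the intended one.
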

\begin{proof}
According to Theorem 4.7 of \cite{carlsson2018convex} a local minimizer $\x$ of \eqref{eq:vecrelax} has $f^{**}(\x) = f(\x)$. If $\card(\x) > k$ then $g(\card(\x)) = \infty$ but $f^{**}(\x)$ is finite. 
Therefore any local minimizer has to have $\card(\x) \leq k$. 
However, according to Theorem~\ref{thm:statpts} $\x$ is the only stationary point with $\card(\x) \leq k$.	
\end{proof}

With the assumptions of Theorem~\ref{thm:sufficientcond} we get the following somewhat stronger result which also ensures existence. 
\begin{corollary}
Under the assumptions of Theorem~\ref{thm:sufficientcond} the problem \eqref{eq:vecrelax} has a unique local minimizer which is also a global minimizer of \eqref{eq:vectprobl}.
\end{corollary}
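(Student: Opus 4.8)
The plan is to read this corollary as the conjunction of the \emph{existence} statement of Theorem~\ref{thm:sufficientcond} and the \emph{uniqueness} statement of the preceding corollary; the only thing that really has to be done is to check that the two sets of hypotheses are compatible, i.e. that among the admissible regularizers in Theorem~\ref{thm:sufficientcond} there is one of the hard-constraint type needed by the preceding corollary. A convenient such choice is the fixed-cardinality relaxation mentioned above, namely $g_i=0$ for $i\le k$ and $g_i=\infty$ for $i>k$.

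First I would verify that this $G$ satisfies \eqref{eq:gconst}. The condition on $\sqrt{g_{k+1}}$ is trivial since $\sqrt{g_{k+1}}=\infty$, and the condition on $\sqrt{g_k}$ reduces, because $g_k=0$, to asking that the right-hand side of the first inequality in \eqref{eq:gconst} be strictly positive, i.e.
\begin{equation*}
\ty_k>\frac{2\|\eps\|}{\sqrt{1-\delta_{2k}}} .
\end{equation*}
This is implied by the hypothesis \eqref{eq:yconst} of Theorem~\ref{thm:sufficientcond}: since $\delta_{2k}<\tfrac12$ we have $\frac{5}{(1-2\delta_{2k})\sqrt{1-\delta_{2k}}}>\frac{5}{\sqrt{1-\delta_{2k}}}>\frac{2}{\sqrt{1-\delta_{2k}}}$, and $\delta_k\le\delta_{2k}<1$ so the factor $1-\delta_k$ in \eqref{eq:gconst} is positive. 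Hence Theorem~\ref{thm:sufficientcond} applies with this $G$ and produces a stationary point $\x$ of \eqref{eq:vecrelax} with $\card(\x)=k$ fulfilling \eqref{eq:singvalsep1}, taking $r=2k$ as agreed below Theorem~\ref{thm:statpts}. (If one prefers the strict inequality $k<r/2$ appearing in Theorem~\ref{thm:statpts}, monotonicity of $\delta_r$ in $r$ lets one replace $2k$ by a marginally larger $r$ without disturbing any hypothesis.)

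With such an $\x$ the statement follows directly from the preceding corollary, whose hypotheses are all met: $\x$ is a stationary point of \eqref{eq:vecrelax} satisfying the assumptions of Theorem~\ref{thm:statpts} with $r=2k$, $\|A\|<1$, and $g_i=\infty$ for $i>k$. Therefore $\x$ is the unique local minimizer of \eqref{eq:vecrelax} and a global minimizer of \eqref{eq:vectprobl}. The underlying mechanism is the same as there: by Theorem~4.7 of \cite{carlsson2018convex} any local minimizer $\x'$ of \eqref{eq:vecrelax} satisfies $f^{**}(\x')=f(\x')$; since $f(\x')=G(\card(\x'))+\|\x'\|^2=\infty$ as soon as $\card(\x')>k$ while $f^{**}(\x')$ is finite, every local minimizer has $\card(\x')\le k$; Theorem~\ref{thm:statpts} then identifies $\x$ as the only stationary point with cardinality $\le k$, hence the only local minimizer; and $f^{**}\le f$ together with the coincidence of the global minimizers of the two problems when $\|A\|<1$ gives the final claim.

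The \emph{only} genuine content is the compatibility check: one must exhibit a single $G$ that simultaneously lies in the admissible range \eqref{eq:gconst} and has $g_i=\infty$ for $i>k$, and the crux of that is the non-emptiness of the interval for $g_k$, equivalently the positivity of $\ty_k-2\|\eps\|/\sqrt{1-\delta_{2k}}$, which is exactly what the signal-to-noise assumption \eqref{eq:yconst} is designed to provide. Everything else is a direct invocation of results already established, so I do not anticipate any serious obstacle beyond making this observation carefully.
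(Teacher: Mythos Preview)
Your argument is correct and follows exactly the two-step route the paper has in mind (the paper gives no explicit proof here): Theorem~\ref{thm:sufficientcond} supplies existence of a stationary point $\x$ with $\card(\x)=k$ satisfying \eqref{eq:singvalsep1}, and the preceding corollary then yields uniqueness among local minimizers and global optimality for \eqref{eq:vectprobl}.

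One interpretational remark. You read the corollary as asserting that \emph{some} admissible $G$ works, and you exhibit the fixed-cardinality choice $g_i=0$ for $i\le k$, $g_i=\infty$ otherwise. Since the corollary sits in Section~\ref{sec:hardconst}, the paper more plausibly intends it for \emph{every} $G$ that satisfies \eqref{eq:gconst} together with the hard constraint $g_i=\infty$ for $i>k$. Your proof extends verbatim to that reading: Theorem~\ref{thm:sufficientcond} applies to any such $G$, and then the preceding corollary finishes. The only extra check in the general case is the hypothesis $\tx_i\notin(0,\sqrt{g_i})$ of Theorem~\ref{thm:statpts}, which is not part of the \emph{statement} of Theorem~\ref{thm:sufficientcond} but is established inside its proof (where it is shown that $\tx_i>\sqrt{g_k}/(1-\delta_k)\ge\sqrt{g_i}$ for $i\le k$). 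For your particular fixed-cardinality choice this condition is vacuous since $\sqrt{g_i}=0$ for $i\le k$, which is why you did not need to address it.
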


\section{Experiments}\label{sec:experiments}
In this section we illustrate the behaviour of the proposed penalty using a range of numerical experiments, both synthetic and from real applications. 
We are in particular interested in differences between the two relaxations obtained with $g_i = \mu$ for all $i$ versus $$\bar{g}_i = \begin{cases} \mu & i \leq k_{\max} \\
\infty & i > k_{\max}.
\end{cases}$$ We denote the functions obtained with these choices $G_\mu(\card)$ and $\bar{G}(\card)$ respectively and their relaxations $\reg_\mu$ and $\reg_{\bar{g}}$ respectively. Both of these attempt to estimate an unknown cardinality based on a trade-off between data fidelity and sparsity. However the second option $\reg_{\bar{g}}$ should be more robust to local minima with high cardinality as outlined in our theory.

\subsection{Synthetic data}\label{subsec:synth_data}

\subsubsection{Robustness - random matrices}\label{subsubsec:rob_rd}

To test the robustness with respect to noise we generated $10$ different problem instances - i.e.~different $100 \times 200$ Gaussian random matrices $A$ (with normalized columns), sparse real ground truths $\x_0$ and noise $\epsilon$ - for each noise level $ \|\epsilon\| / \| \b \| \in \{0.025i \, : \, i=0, \dots , 10  \}     $ and averaged the output distances $ \| \tilde{\x} - \x_0 \|/ \|\x_0 \|$, with $\tilde{\x} $ approximated solution computed by the minimizing algorithm. Each $\x_0 $ was chosen with random cardinality between $10$ and $18$ and with the property that $ \min_i |\x_{0,i}|> 2 \sqrt{2}$. To approximately recover $x_0$ we use the formulation $g_i =2 $ for $i \le k_{\text{max}} =20$ and $g_i=\infty$ otherwise, and refer to it as $\reg_{\bar{g}}$ in the results. We used \(\mathbf{0}\) as starting point and the Forward-Backward Splitting as minimizing algorithm. 

For comparison we also test the Least Absolute Shrinkage and Selection Operator (LASSO), $\ell^p$ with $p=1/2, 2/3$, and the Smoothly Clipped Absolute Deviation (SCAD) which are popular approaches from the literature.
To avoid issues with parameter selection and achieve the best possible performance for these competing methods their parameters were picked using a line search \emph{for each problem instance}. So, for each problem instance $A$, $\epsilon$ and $\x_0$, we computed $ \| \tilde{\x} - \x_0 \|/ \|\x_0 \|$ for several choices of the involved parameters and stored the best outcome. The result is shown in the top left graph in Figure~\ref{fig:distvsnoise}. Here $\reg_g$ outperforms the other relaxations consistently giving the best fit to the ground truth data.
The behaviour that all methods approach the ground truth solution when the noise decreases is due to the fact that the parameter is exactly tuned to the noise level for each problem. We emphasize that in real applications such a strategy is normally not feasible when the noise level is unknown. 

A more realistic scenario is to use the same parameter setting for all noise levels. Therefore we also performed a second batch of experiments where one single parameter for each competitor method was used. The parameters chosen minimize the average error through all the noise levels and all trials. The outcome (second row of Figure \ref{fig:distvsnoise}) highlights the benefits of the noise-invariance of the bias-free methods ($\reg_g$ and SCAD). These can handle varying noise levels with a single parameter setting.

\subsubsection{Robustness - concatenation between Fourier transform and identity}\label{subsubsec:rob_fi}
 For a matrix \( A \in \mathbb{C}^{m \times N} \), the quantity \[  \mu(A) = \max_{1 \le i \ne j \le N} | \langle \a _i , \a _j \rangle |  \] is known as \emph{mutual coherence} of \(A\). In compressive sensing a small mutual coherence is desirable because it controls all the restricted isometry constants from above (cfr.~Prop.~6.2 in \cite{Rauhut_Foucart}); the matrix \( A = [F|I] \) with \(F\) being the \(1D\) Fourier transform matrix and \(I\) the identity matrix, is known to have mutual coherence \( = 1/\sqrt{m}\) and it is often used in compressive sensing algorithms or techniques benchmarking.
 
 We ran a similar set of tests as in Section \ref{subsubsec:rob_rd} using $A$ of dimensions $100 \times 200 $ instead of random matrices: we generated $10$ different problem instances - i.e.~different sparse real ground truths $\x_0$ and noise $\epsilon$ - for each noise level $ \|\epsilon\| / \| \b \| \in \{0.025i \, : \, i=0, \dots , 10  \}     $ and averaged the output distances $ \| \tilde{\x} - \x_0 \|/ \|\x_0 \|$, with $\tilde{\x} $ approximated solution computed by the minimizing algorithm. Each $\x_0 $ was chosen with cardinality $10$ and with the property that $ \min_i |\x_{0,i}|> 2 \sqrt{2}$. In our formulation we selected $g_i = 2$ for $i=1, \dots , k_{\text{max}}=20$; the competitor methods are the same as in Section \ref{subsubsec:rob_rd} and their parameters choices were again made via the same technique(s). The outcome mostly mirrors that in Section \ref{subsubsec:rob_rd}, see the second columns of Figure \ref{fig:distvsnoise}.

\begin{figure*}
    \centering
    \def\w{85mm}
    \begin{tabular}{cc}
    Random matrices, line-searched parameters & \( [F|I] \), line-searched parameters  \\
    \includegraphics[width=\w]{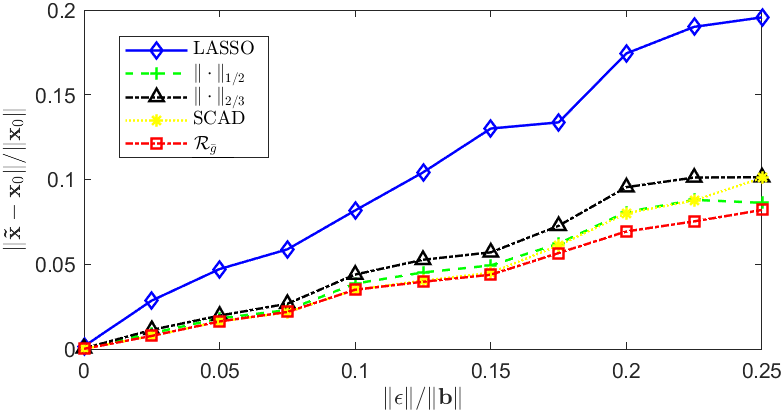} &
    \includegraphics[width=\w]{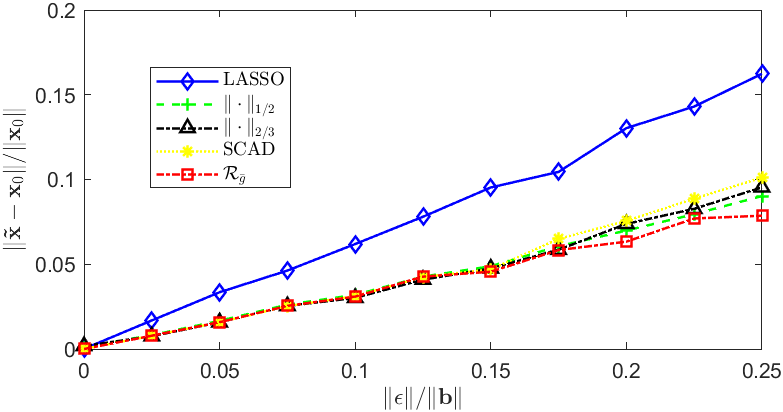} \\
    Random matrices, fixed parameters & \( [F|I] \), fixed parameters  \\
    \includegraphics[width=\w]{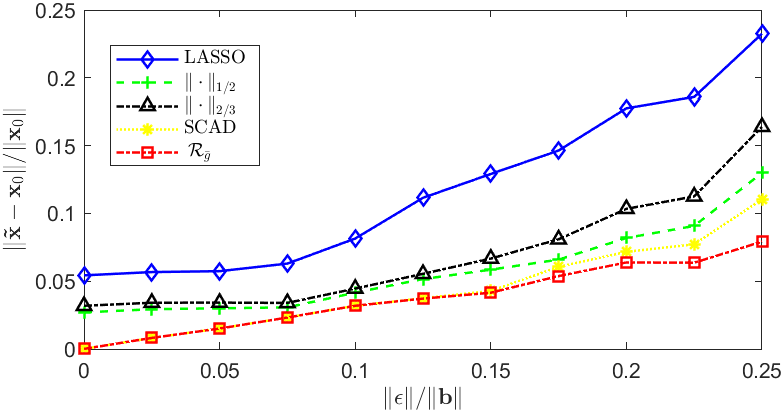} &
    \includegraphics[width=\w]{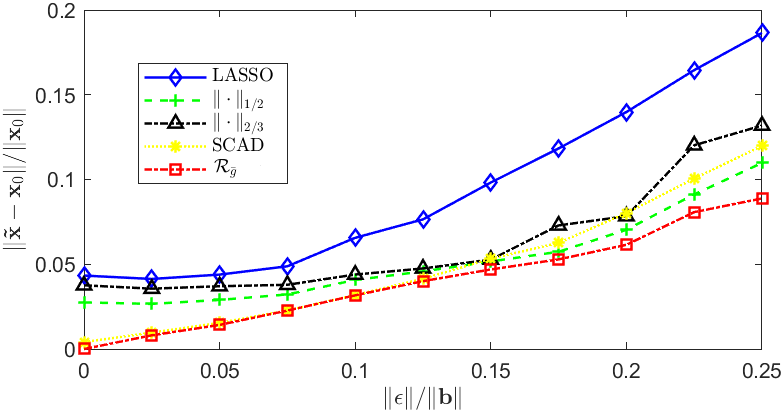}
    
    \end{tabular}
    \caption{Normalized error vs normalized noise level. The first column experiments with random matrices, see Section \ref{subsubsec:rob_rd}; the second, concatenation of Fourier matrix with identity, see Section \ref{subsubsec:rob_fi}.}
    \label{fig:distvsnoise}
\end{figure*}

\subsubsection{Sparsity}\label{subsubsec:sparse}
In a second batch of experiments we studied the cardinality of the retrieved approximation when a starting point not too far from the ground truth is employed. For a fixed noise level $ \|\epsilon\| / \| \b \| = 0.15$ we fixed a triplet $A$ (Gaussian, with normalized columns), $\epsilon$ and $\x_0$ with $\card{(\x_0)}=10 $ and again $ \min_i |\x_{0,i}|> 2 \sqrt{2} $, and we generated $250$ different random (with uniform distribution) starting points $\x_S $ with $ 0.2\|\x_0\| \le \|\x_S\| \le 3\|\x_0\| $. In Table \ref{table:75x100randst} we display mean and standard deviation of $ S_m(\tilde{\x},\x_0)= \card{(\supp{(\tilde{\x}) } \, \triangle \, \supp{(\x_0) } )} $\footnote{$\triangle$ is the set-theoretic symmetric difference.} and normalized distance to ground truth $ \| \tilde{\x} - \x_0 \|/ \|\x_0 \|$ in the scenario $75 \times 200$. The parameter choice for the competing methods was made with a line search for each single problem instance (as in the previous section). To achieve a the correct support as well as a good fit to the ground truth solution we selected the parameter that minimized the quantity $0.8 \, \card{(\supp{(\tilde{\x}) } \, \triangle \, \supp{(\x_0) } )} / \|\x_0\|_0 + 0.2\| \tilde{\x} - \x_0 \|/ \|\x_0 \|$. The proposed regularizer (indicated in the legend as ``$\reg_{\bar{g}}$") displays a solid behaviour.

\begin{table}[h!]
\centering
\begin{minipage}[t]{0.48\linewidth}
\caption{Random starting point, $75 \times 200$ scenario}
\begin{tabular}{|c|cc|cc|}
\hline
\multirow{2}{*}{} & \multicolumn{2}{c|}{$ \| \tilde{\x} - \x_0 \|/\|\x_0\| $} & \multicolumn{2}{c|}{$S_m(\tilde{\x},\x_0)$} \\ 
& \textbf{Mean} & \textbf{St.~dev.} & \textbf{Mean} & \textbf{St.~dev.}  \\ \hline $\mathcal{R}_{\bar{g}}$ & 0.0768 & $1.96 \cdot 10^{-7}$ & 0 & 0 \\ \hline
$\mathcal{R}_\mu$   & 0.0768  & $2.20 \cdot 10^{-6}$   & 0   & 0 \\ \hline
SCAD & 0.0898 & $2.03 \cdot 10^{-7}$ & 0 & 0 \\ \hline
 LASSO  & 0.7290 & $1.09 \cdot 10^{-7}$ & 2 & 0 \\ \hline
$\ell^{1/2}$ & 0.1465 & 0.1626 & 0.4840 & 1.6915 \\ \hline
$\ell^{2/3}$ & 0.1147 & 0.0052 & 0 & 0 \\
\hline
\end{tabular}
\label{table:75x100randst}
\end{minipage}
\begin{minipage}[t]{0.48\linewidth}
\centering
\caption{$A^\dagger \b$ as starting point, $100 \times 200$ scenario}

\begin{tabular}{|c|cc|cc|}
\hline
\multirow{2}{*}{} & \multicolumn{2}{c|}{$ \| \tilde{\x} - \x_0 \|/\|\x_0\| $} & \multicolumn{2}{c|}{$S_m(\tilde{\x},\x_0)$} \\ 
& \textbf{Mean} & \textbf{St.~dev.} & \textbf{Mean} & \textbf{St.~dev.}  \\ \hline $\mathcal{R}_{\bar{g}}$ & 0.0643 & 0.1079 & 0.1080 & 0.3273 \\ \hline
$\mathcal{R}_\mu$   & 0.0636  & 0.1343   & 0.2360   & 2.9772 \\ \hline
SCAD & 0.0885 & 0.0582 & 0.1240 & 0.3758 \\ \hline
 LASSO  & 0.3863 & 0.1490 & 1.2800 & 1.5734 \\ \hline
$\ell^{1/2}$ & 0.0856 & 0.1449 & 0.4560 & 2.4658 \\ \hline
$\ell^{2/3}$ & 0.0650 & 0.0157 & 0 & 0 \\
\hline
\end{tabular}
\label{table:100x200lsqst}
\end{minipage}
\end{table}
\begin{table}[h!]
\centering
\caption{Local minima counter}
\begin{tabular}{||c c c c c||} 
 \hline
  &$\mathcal{R}_{\bar{g}}$ & $\mathcal{R}_\mu$ & $\|\cdot\|_{1/2}$ & $\|\cdot\|_{2/3} $\\ [0.5ex] 
 \hline\hline
 Loc.~min.~detected & 0 & 100 & 1 & 0 \\ 
  [1ex] 
 \hline
\end{tabular}

\label{table:locmin}

\end{table}
\begin{figure*}
    \centering
    \def\w{42mm}
    \begin{tabular}{cccc}
    Drink & Pickup & Stretch & Yoga \\
    \includegraphics[width=\w]{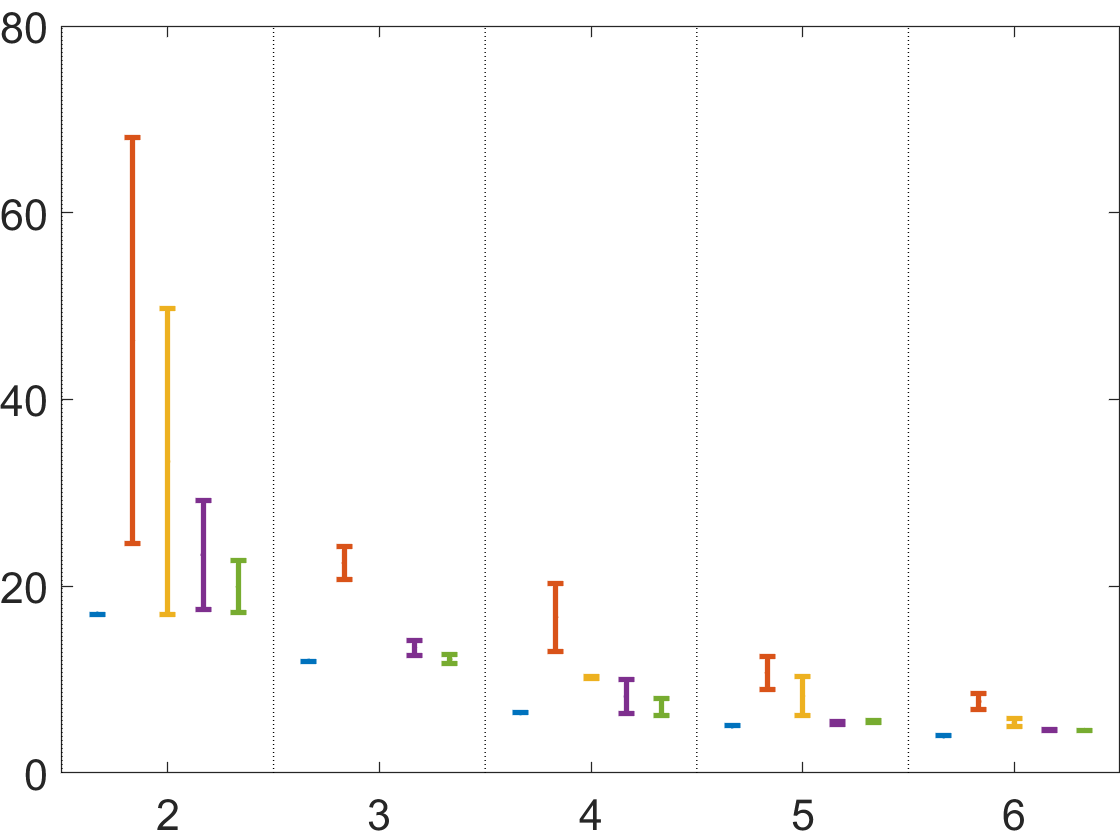} &
    \includegraphics[width=\w]{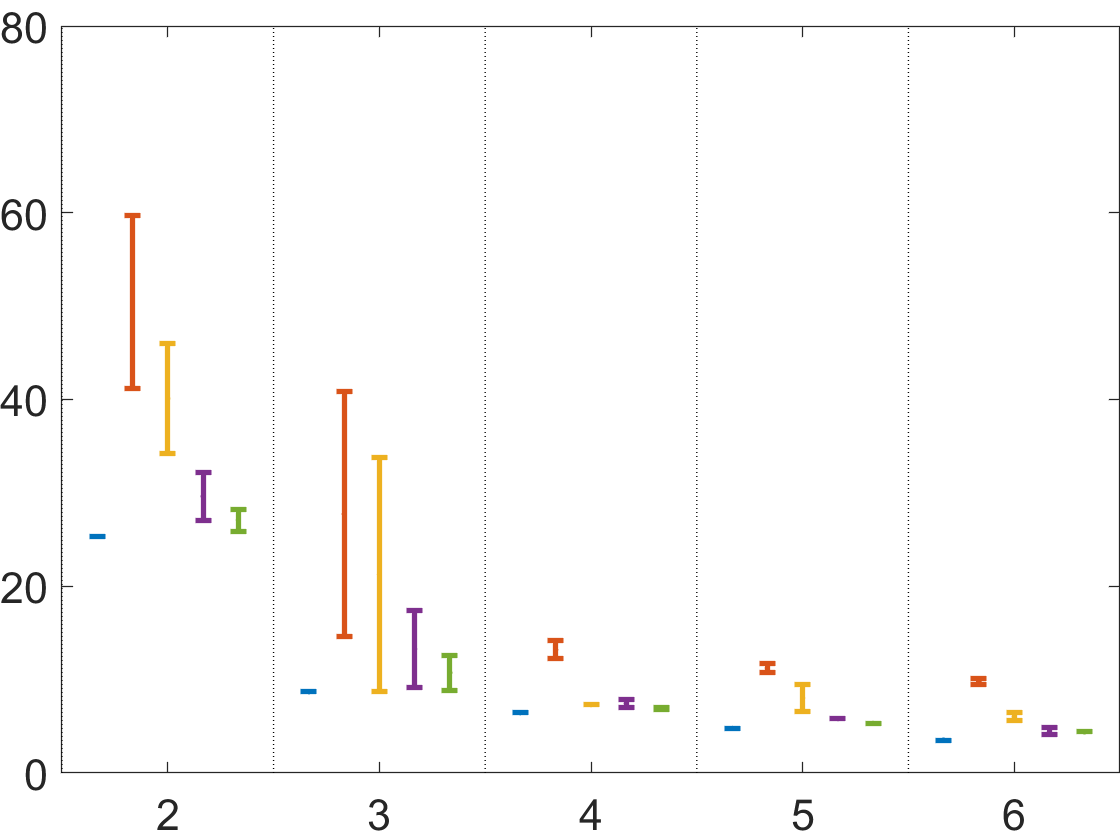} &
    \includegraphics[width=\w]{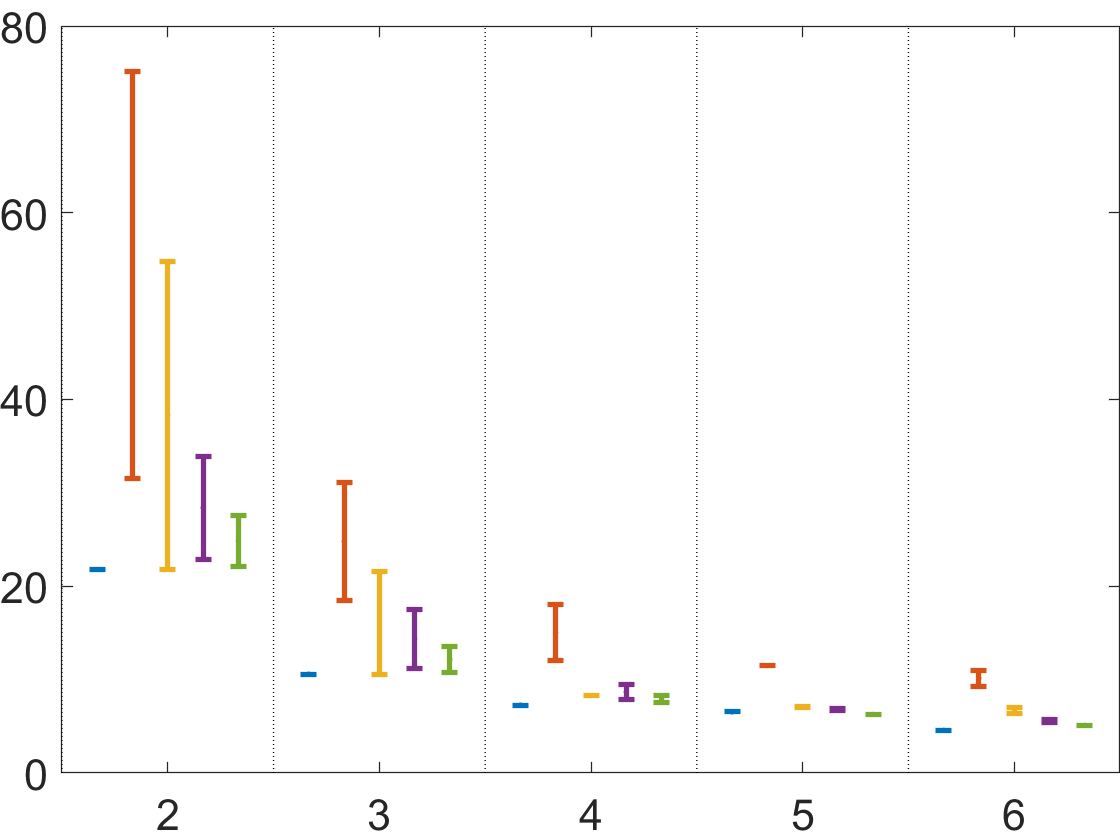} &
    \includegraphics[width=\w]{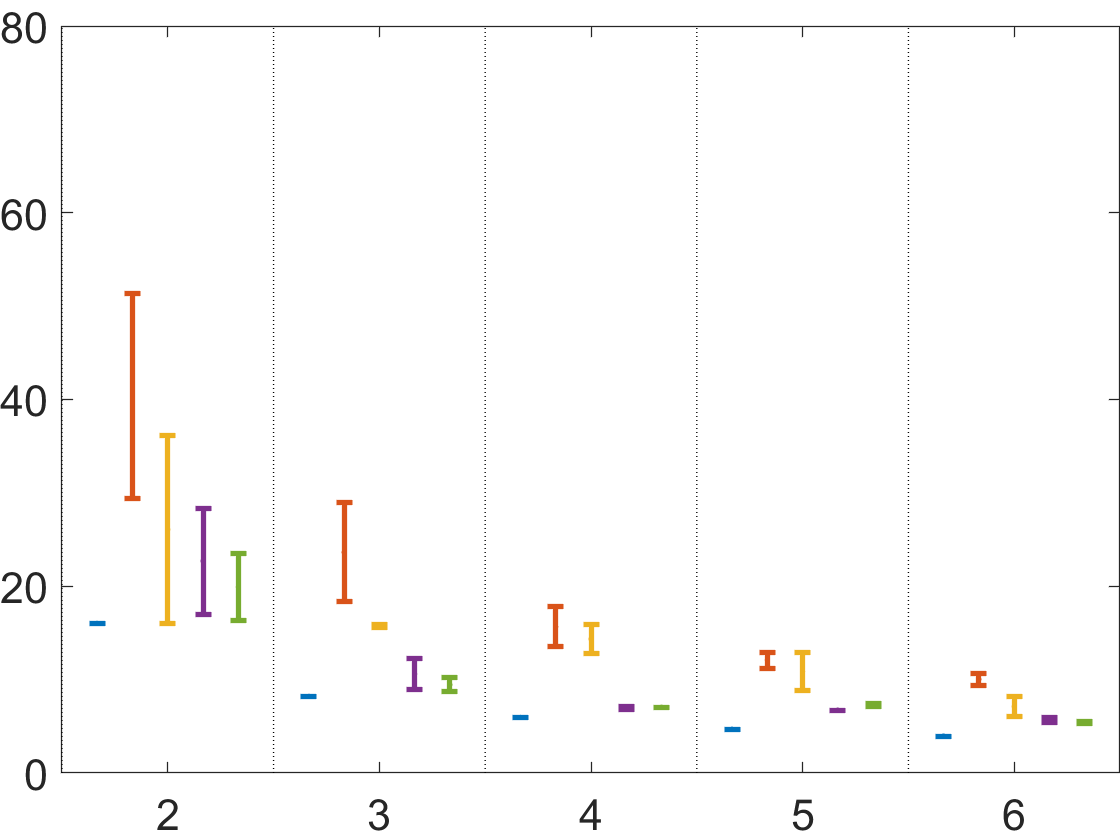}
    \end{tabular}
    \caption{Rank of $X^\#$ (x-axis) versus data fit $\|RX-M\|_F$ (y-axis) for the MOCAP sequences, drink, pickup, stretch and yoga, used in \cite{dai-etal-ijcv-2014}. Blue - $\reg_g$, red - $\|\cdot\|_*$, yellow - SCAD, purple - Schatten-$2/3$ and green - Schatten-$1/2$.}
    \label{fig:mocap}
\end{figure*}

\subsubsection{Local minima suppression}\label{subsubsec:min_suppr}
In \cite{carlssonIP_2020} was numerically shown that the algorithm minimizing the functional $\reg_\mu(\x) + \|A \x - \b\|^2$ tends to get stuck in high cardinality local minima when the least square solution is used as starting point. The present paper can be also seen as an attempt to overcome that issue and in this section we numerically confirm that those high cardinality local minima seem to be suppressed when our new penalty is employed. We again generated $250$ different problem instances with the same specs as in Section \ref{subsubsec:sparse} and used a least square solution to the linear system $A \x = \b $ as starting point for the algorithm. Note that since $A$ has a nullspace there are in general multiple least squares solutions. In Table we \ref{table:100x200lsqst} used $A^\dagger \b$. The results show that while $A^\dagger \b$ seems like a sensible starting point it often gives sub-optimal results. 
This is in particular true for the bias free regularizer $\reg_\mu$ that has difficulty recovering from a high cardinality starting point. The proposed $\reg_{\bar{g}}$ is in general much less affected than the other bias free methods. We also remark that strictly speaking deviations from the ground truth $x_0$ may not be a result of local minima since $x_0$ is not a minimizer.

The point $A^\dagger \b$ is an intuitive initialization and indeed it is a bit surprising that it produces local minima. A less intuitive choice is what is described in Section~\ref{sec:motivation}, that is, points of type $A^\dagger \b + \x_k $ where $\x_k \in \text{ker}(A) $ is dense such that $\min_i |(A^\dagger \b + \x_k)_i|> 2 \sqrt{2} $, which we consider in Table~\ref{table:locmin}. These are still least square solutions and they are located in the region where the penalty $\reg_\mu(\x) $ is constant; thus they are local minima for the functional $\reg_\mu(\x) + \|A \x - \b\|^2$. For a $100 \times 200$ random matrix $A$ we generated $100$ linearly independent $\x_k$ and tested whether the points $A^\dagger \b + \x_k$ are local minima or not for the functionals $\reg_\mu(\x) + \|A \x - \b\|^2$, \eqref{eq:vecrelax} and $ \|\x\|_p + \|A \x - \b\|^2$ (for $p=1/2, 2/3$); we here picked $k_{\text{max}}=16 > \card(\x_0) \in \{9, 10,\dots,14\} $ chosen randomly. Table \ref{table:locmin} displays the results of our experiment: it shows that all those points are (as expected) local minima for $\reg_\mu(\x) + \|A \x - \b\|^2$ but not for \eqref{eq:vecrelax} and motivates one more time the constructions in the present manuscript.

\subsection{Rank Regularization for NRSfM}
We conclude our experiments by considering an application of the matrix version of our framework.
Non-rigid structure from motion (NRSfM) is a classical computer vision problem where object dynamics is modeled using a rank constraint. In this section we follow \cite{dai-etal-ijcv-2014} and extract a deforming model from point tracks obtained with a moving camera. Under the linear deformation assumption \cite{bregler-etal-cvpr-2000,dai-etal-ijcv-2014} the deforming 3D point cloud can be represented using a low rank matrix $S$ where row $i$ contains $x-$, $y-$ and $z-$ coordinates of the point cloud when image $i$ was captured. To recover $S$ we solve
\begin{equation}
\min_S R_g(S) + \|R S^\# - M\|^2.     
\end{equation}
Here $R$ is a matrix containing camera rotations and $S^\#$ is a matrix where the elements of $S$ have been reordered, see \cite{dai-etal-ijcv-2014} for detailed definitions. 

In Figure~\ref{fig:mocap} we use data from \cite{dai-etal-ijcv-2014} to test our regularizer $\reg_g$, with $g_k=0$ if $k \leq k_{\max}$ and $\infty$ otherwise, against the nuclear norm, SCAD and the Schatten-norms. Only $\reg_g$ can directly penalize the rank. The other competing methods have different parameters that needs to be tuned to indirectly obtain a certain rank. Not that while a small parameter change may not change the rank it can still change the solution since the bias is affected. For a fair comparison we therefore sample parameters over a whole range of values to see what data fit can we achieve with settings that give particular rank.
Note that since the operator $S \mapsto R S^\#$ has low rank matrices in its null-space it therefore does not fulfill RIP, therefore our method could have local minima.
For ranks between $2$ an $6$ Figure~\ref{fig:mocap} shows error bars covering the best and the worst data fit for each method. The bias is most clearly visible for the lowest rank (2) where the methods have to suppress more noise. Our method (blue) consistently gives the lowest data fit for each rank. 

\section{Conclusions}
In this paper we have presented and analysed a general framework for sparsity and rank regularization of linear least squares problems. Our regularizers are bias free an non-separable which admits increased modeling power compared to standard separable version. Our theoretical analysis shows that under the RIP constraint stationary points are often unique even though our framework is non-convex. Our empirical results further demonstrate that we outperform competing methods in terms of accuracy and robustness.

\newpage
{\small
\bibliographystyle{plain}
\bibliography{newlib}
}

\newpage

\appendix

\section{Preliminaries on Convex Envelopes and Subdifferentials}~\label{app:convenv}
In this section we review some basic facts about convex envelopes and their sub-differentials that we will use in our theory.
Throughout the section we will assume that any infimum is attained. This is true for example if the function is lower semi continuous with bounded level sets, which is the case for our objective function $f(\x) = g(\card(\x))+\|\x\|^2$. In addition the quadratic term grows faster than any linear term of the type $\skal{\x,\y}$ and therefore this is also true when we add linear terms.

The convex envelope $f^{**}$ of a function $f$ is the largest convex function that fulfills $f^{**} \leq f$. For a convex function we should have $f^{**}(\sum_i \lambda_i \x^j) \leq  \sum_j \lambda_j f^{**}( \x^j)$, $0 \leq \lambda_j$, $\sum_j \lambda_j=1$.
At a point $\x = \sum_j \lambda_j \x^j$ where $f(\x) > \sum_j \lambda_i f( \x^j)$ we compute the value $f^{**}(\x)$ by minimizing over convex combinations of points using 
\begin{equation}
f^{**}(x)= \min \left\{ \sum_{j=1}^{d+1} \lambda_j f(\x^j); \quad \sum_{j=1}^{d+1} \lambda_j \x^j = \x, \quad \sum_{j=1}^{d+1} \lambda_j = 1, \quad \lambda_j > 0  \right\}.
\label{eq:convenv}
\end{equation}
It can be shown (using Carathéodory's Theorem) that it is enough to consider combinations of $d+1$ points if $\x \in \mathbb{R}^d$. Figure~\ref{fig:convenv} shows one example of convex envelope. Here the two functions $f^{**}$ and $f$ coincide at $x=0$ and $|x| > 1$.
If $x \in (0,1)$ where the functions differ and the value of $f^{**}$ is computed using the convex combination $(1-x)f(0)+xf(1)$. Note that when $f$ and $f^{**}$ differs the function $f^{**}$ will be affine in some direction.
\begin{figure}[htb]
\begin{center}
\includegraphics[width=80mm]{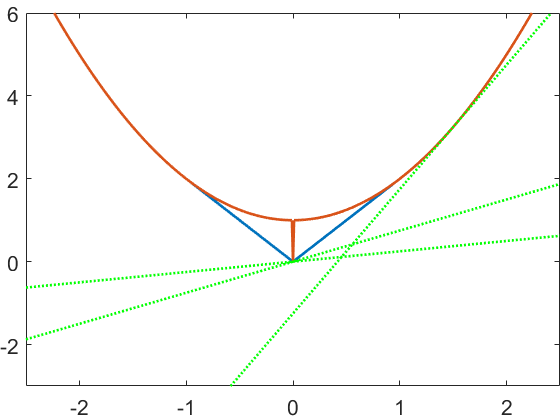}
\end{center}
\caption{An example of convex envelope. Here  $f^{**}(x) = \mu - \max(\sqrt{\mu}-|x|,0)^2+x^2$ (blue curve) and $f(x) = \mu \card(x)+x^2$ (orange curve).
Green dotted lines are supporting hyperplanes of the form $xy - f^*(y)$ for tree different $y$.}\label{fig:convenv}
\end{figure}

An alternative way of computing $f^{**}$ is using supporting hyperplanes and the conjugate function
\begin{equation}
f^*(\y) = \max_\x \skal{\x,\y}-f(\x).
\label{eq:conjugate}
\end{equation}
From the definition it is clear that 
\begin{equation}
f^*(\y) \geq \skal{\x,\y}-f(\x),
\end{equation}
for all $\x$. Rearranging terms we get
\begin{equation}
f(\x) \geq \skal{\x,\y}-f^*(\y),
\end{equation}
which is a an affine function in $\x$ and therefore a supporting hyperplane to $f$.
Figure~\ref{fig:convenv} shows three supporting hyperplanes for $f$. 
Note that these touch $f^{**}$ in (at least) one point. 
For each $\x$ we can find a hyperplane that touches $f^{**}(\x)$ which means that
\begin{equation}
f^{**}(\x) = \max_\y \skal{\x,\y} - f^*(\y),
\end{equation}
that is, the convex envelope is the conjugate of the conjugate function.

For a convex function $f^{**}$ the set of sub-gradients $\partial f^{**}(\x)$ at a point $\x$ is defined as all vectors $\y$ such that
\begin{equation}
f^{**}(\x') - f^{**}(\x) \geq \skal{\y,\x'-\x}, \quad \forall \x'.
\end{equation}
or equivalently
\begin{equation}
\skal{\y,\x}-f^{**}(\x) \geq \skal{\y,\x'}-f^{**}(\x') , \quad \forall \x'.
\end{equation}
Since we clearly have equality when $\x'=\x$ this means that $ \skal{\y,\x}-f^{**}(\x) = \max_{\x'}\skal{\y,\x'}-f^{**}(\x') = f^{***}(\x) = f^{*}(\x)$.
Rearranging terms shows that
\begin{equation}
\skal{\y,\x}-f^{*}(\y) = f^{**}(\x) = \max_{\y'}\skal{\y',\x}-f^{*}(\y').
\end{equation}
Thus the set of sub-gradients at a point $\x$ are all the vectors $\y$ that achieves the maximal value in the second conjugation. In points where $f^{**}$ is non-differentiable the function has several sub-gradients. In a differentiable point the only sub-gradient is the standard gradient.

The following result does not appear to be standard but is crucial for our main theorem. Therefore we state it somewhat more formally below.

\begin{lemma}\label{conj:decomp}
	Suppose that for a point $\x$ we have $f(\x) > f^{**}(\x)$.
	Then there is a set of points $\{\x^j\}$ such that 
	\begin{equation} 
	\x = \sum_j \lambda_j \x^j, \quad 0 \leq \lambda_j \leq 1, \quad \sum_j \lambda_j = 1,
	\end{equation}
	$f^{**}(\x^j)=f(\x^j)$ and 
	\begin{equation}
	f^{**}(\x) = \sum_j \lambda_j f(\x^j).
	\end{equation}
	In addition $\partial f^{**}(\x) \subset \bigcap_j \partial f^{**}(\x^j)$.
\end{lemma}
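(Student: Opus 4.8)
The plan is to combine the Carathéodory-type representation formula \eqref{eq:convenv} with the standard duality characterization of subgradients established just above, and then to upgrade each representing point so that it lies on the graph of $f$ rather than merely in its epigraph. First I would invoke \eqref{eq:convenv}: since $f$ has bounded level sets and the infimum is attained, there is a convex combination $\x = \sum_j \lambda_j \x^j$ with $\lambda_j > 0$, $\sum_j \lambda_j = 1$ (at most $d+1$ terms), realizing $f^{**}(\x) = \sum_j \lambda_j f(\x^j)$. The subtle point is that a priori we only know this for $f$-values; I would first argue that we may assume $f^{**}(\x^j) = f(\x^j)$ at each representing point. Indeed, if $f^{**}(\x^{j_0}) < f(\x^{j_0})$ for some $j_0$, then replacing $f(\x^{j_0})$ by $f^{**}(\x^{j_0})$ in the sum would give a value strictly below $f^{**}(\x)$, contradicting convexity of $f^{**}$ (namely $f^{**}(\x) \le \sum_j \lambda_j f^{**}(\x^j) \le \sum_j \lambda_j f(\x^j) = f^{**}(\x)$, forcing equality throughout). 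Hence $f^{**}(\x^j) = f(\x^j)$ for all $j$, which also shows $f^{**}(\x) = \sum_j \lambda_j f^{**}(\x^j)$: the chain of inequalities is an equality chain.

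Next I would prove the subgradient inclusion $\partial f^{**}(\x) \subset \bigcap_j \partial f^{**}(\x^j)$. Fix $\y \in \partial f^{**}(\x)$. By the duality identity derived above, this is equivalent to $\skal{\y,\x} - f^*(\y) = f^{**}(\x)$, i.e. equality in the Fenchel--Young inequality $f^{**}(\x') \ge \skal{\y,\x'} - f^*(\y)$ at $\x' = \x$. Now evaluate this Fenchel--Young inequality at each $\x^j$ and take the $\lambda_j$-weighted sum:
\begin{equation*}
f^{**}(\x) = \sum_j \lambda_j f^{**}(\x^j) \ge \sum_j \lambda_j \bigl( \skal{\y,\x^j} - f^*(\y) \bigr) = \skal{\y,\x} - f^*(\y) = f^{**}(\x).
\end{equation*}
Since the extremes coincide, every intermediate inequality is an equality; because all $\lambda_j > 0$, this forces $f^{**}(\x^j) = \skal{\y,\x^j} - f^*(\y)$ for every $j$, which by the same duality characterization means $\y \in \partial f^{**}(\x^j)$. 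Thus $\y \in \bigcap_j \partial f^{**}(\x^j)$, completing the inclusion.

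The main obstacle I anticipate is the first step --- ensuring the representing points can be taken on the graph of $f$ (i.e. $f^{**}(\x^j) = f(\x^j)$) and simultaneously that the convex combination is exact for $f^{**}$. For a general lower semicontinuous $f$ the minimizing combination in \eqref{eq:convenv} might involve points where $f$ strictly exceeds $f^{**}$; the argument sketched above resolves this by a strict-inequality/contradiction argument exploiting convexity of $f^{**}$, but one should be slightly careful that the infimum in \eqref{eq:convenv} is genuinely attained (guaranteed here by the coercivity of $\|\x\|^2$ dominating any linear perturbation, as noted at the start of the appendix) so that there really exist such $\x^j$. Once the representation is secured, the subgradient inclusion is a soft consequence of Fenchel--Young equality propagating through a strictly positive convex combination, as above. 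One minor point worth stating explicitly: the hypothesis $f(\x) > f^{**}(\x)$ guarantees the combination is nontrivial (more than one distinct point), which is what makes the statement nonvacuous, though the argument itself does not actually require it.
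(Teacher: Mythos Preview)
Your proposal is correct and follows essentially the same approach as the paper: both use the attained Carath\'eodory representation \eqref{eq:convenv}, then argue by convexity of $f^{**}$ that $f^{**}(\x^j)=f(\x^j)$ at each representing point, and finally show that equality in the affine minorant inequality must hold termwise since $\lambda_j>0$. The only cosmetic difference is that you phrase the last step via the Fenchel--Young equality $\skal{\y,\x}-f^*(\y)=f^{**}(\x)$ while the paper works directly with the subgradient inequality $f^{**}(\x')\ge f^{**}(\x)+\skal{\y,\x'-\x}$; these are equivalent formulations already recorded in the preliminaries.
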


\begin{proof}
Consider the convex combination $\x=\sum_j \x^j$ that solves the minimization in \eqref{eq:convenv}.

We have that $f(\x^j) \geq f^{**}(\x^j)$.
Assume further that $f(\x^j) > f^{**}(\x^j)$ for some $j$. Then we have 
\begin{equation}
f^{**}(\sum_j \lambda_j \x^j) =f^{**}(\x) = \sum_j \lambda_j f(\x^j) > \sum_j \lambda_j f^{**}(\x^j),
\end{equation}
which contradicts the convexity of $f^{**}$. Therefore $f(\x^j) = f^{**}(\x^j)$ for all $j$. 

Now consider a subgradient $\y\in \partial f^{**}(\x)$. By definition we have that
\begin{equation}
f^{**}(\x') \geq f^{**}(\x) + \skal{\y,\x'-\x}.
\end{equation}
Now assume that 
\begin{equation}
f^{**}(\x^j) > f^{**}(\x) + \skal{\y,\x^j-\x},
\end{equation}
for some j.
Then we have
\begin{equation}
\underbrace{\sum_j \lambda_j f^{**}(\x^j)}_{= f^{**}(\x)} > \underbrace{\sum_j \lambda_j f^{**}(\x)}_{=f^{**}(\x)} + \underbrace{\sum_j \lambda_j \skal{\y,\x^j-\x}}_{=0},  
\end{equation}
which shows that we must have 
\begin{equation}
f^{**}(\x^j) = f^{**}(\x) + \skal{\y,\x^j-\x}.
\end{equation}
This gives us 
\begin{equation}
f^{**}(\x^j) + \skal{\y,\x'-\x^j} =  f^{**}(\x) + \skal{\y,\x'-\x} \leq f^{**}(\x'), 
\end{equation}
which shows that $\y \in \partial f^{**}(\x^j)$ for all $j$.
\end{proof}

\subsection{The Conjugate of $f$}
We now consider our class of functions $f(\x) = g(\card(\btx))+\|\btx\|^2$.
Consider the conjugate \eqref{eq:conjugate}. Since $f(\x)$ only depends on $\btx$ and not the signs or ordering of the elements it is clear that the elements of $\x$ should have the same sign and ordering as those in $\y$ to maximize the term $\skal{\x,\y}$.
Therefore we get
\begin{equation}
f^*(\y) = \max_\btx \skal{\btx,\bty}-g(\card(\btx))-\|\btx\|^2.
\end{equation} 
This can equivalently be written
\begin{equation}
\max_k \max_{\|\btx\|_0 = k} \skal{\btx,\bty} - \sum_{i=1}^k \left(g_i + \tx_i^2 \right).
\end{equation}
Completing squares gives
\begin{equation}
\max_k \max_{\|\btx\|_0 = k} -\|\btx-\frac{1}{2}\bty\|^2 + \frac{1}{4}\|\bty\|^2 - \sum_{i=1}^k g_i.
\end{equation}
It is clear that the inner maximization is solved by letting $\tx_i = \frac{\ty_i}{2}$ if $i\leq k$ and $\tx_i= 0$ otherwise. After some simple manipulations this gives the conjugate function
\begin{equation}
f^*(\y) = \sum_{i=1}^n \max(\frac{1}{4}\ty_i^2 - g_i,0).
\end{equation}
Not that the computations for the matrix case are close to identical. 
In this case we maximize the scalar product $\skal{X,Y}$ 
when $X$ and $Y$ SVDs with the same U and V matrices (von Neumann's trace theorem),
in this case $\skal{X,Y}=\skal{\btx,\bty}$.

\subsection{The biconjugate of $f$}
Taking the conjugate once more gives
\begin{equation}
f^{**}(\x) = \max_\y \skal{\x,\y} - \sum_{i=1}^n \max(\frac{1}{4}\ty_i^2 - g_i,0).
\end{equation}
Again the second term only depends on the elements of $\bty$ and therefore
\begin{equation}
f^{**}(\x) = \max_\bty \skal{\btx,\bty} - \sum_{i=1}^n \max(\frac{1}{4}\ty_i^2 - g_i,0).
\end{equation}
For ease of notation we let $\bty = 2\btz$ which gives
\begin{equation}
f^{**}(\x) = \max_\btz 2\skal{\btx,\btz} - \sum_{i=1}^n \max(\tz_i^2 - g_i,0).
\label{eq:zmax}
\end{equation}
The maximization over $\btz$ does in general not have any closed form solution but has to be evaluated numerically. Note however that it is a concave maximization problem that we can solve efficiently. One exception where we can find $\btz$ is for points $\x$ where $f^{**}(\x)=f(\x)$. In what follows we will derive some properties of the maximizing $\z$ that simplifies the optimization. 

We first consider the elements of $\btz$ independently without regard for their ordering.
Each one has an objective function of the form
\begin{equation}
c_i(\tz_i) := 2\tx_i \tz_i - \max(\tz^2_i - g_i,0). 
\label{eq:singvalfun}
\end{equation}
\begin{figure}
\begin{center}
\includegraphics[width=27mm]{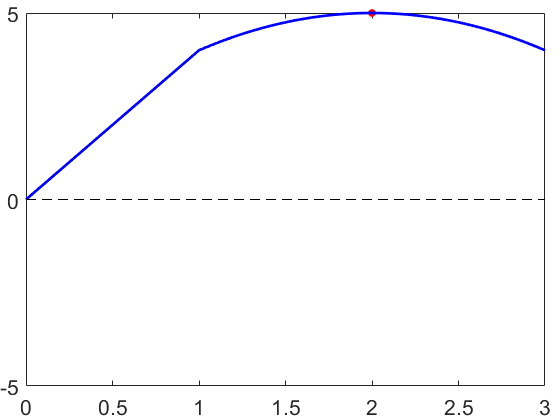}
\includegraphics[width=27mm]{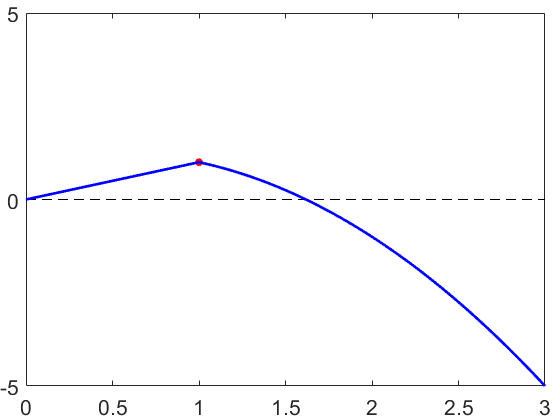}
\includegraphics[width=27mm]{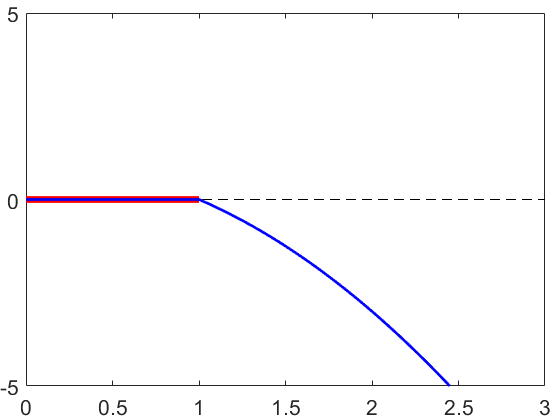}
\end{center}
\caption{The objective function \eqref{eq:singvalfun} for $g_i=1$ and $x_i = 0, \frac{1}{2} $ and $2$. The maximizing points are shown in red in each case.}
\label{fig:singvalfun}
\end{figure}
Figure~\ref{fig:singvalfun} shows $c_i(\tz_i)$ for different values of $\tx_i$. When $\tx_i \geq \sqrt{g}_i \geq 0$ there is a unique maximizing point in $\tz_i=\tx_i$.
If $0 < \tx_i \leq \sqrt{g}_i$ the maximizing point is $\tz_i=\sqrt{g_i}$.
In the last case where $0=\tx_i \leq \sqrt{g_i}$ any $\tz_i \in [0,\sqrt{g_i}]$ is a maximizer. 
Suppose that we select $k$ such that 
\begin{eqnarray}
\tx_i \geq \sqrt{g_i} & i \leq k\\
\tx_i < \sqrt{g_i} & i > k.\\
\end{eqnarray}
Then the unconstrained minimizers $u_i^*$ of \eqref{eq:singvalfun} can be written
\begin{equation}
u_i^* \in \begin{cases}
\tx_i &  i \leq k \\
\sqrt{g_i} & i > k, \tx_i \neq 0 \\
[0, \sqrt{g}_i] & i > k, \tx_i = 0 
\end{cases}
\end{equation}
Before we proceed any further we note that if $\tx_i \notin (0,\sqrt{g_i})$ the second case will not occur. We can then select the elements of $\btz$ so that each $\tz_i$ maximizes $c_i(\tz_i)$ without violating the ordering constraint.
\begin{lemma}\label{lemma:knownz}
If $\tx_i \notin (0,\sqrt{g_i})$ then the vectors maximizing \eqref{eq:zmax} are given by
\begin{equation}
\tz^*_i = \begin{cases}
\tx_i & i \leq k \\
s_i & i > k \\
\end{cases},
\label{eq:redsform}
\end{equation}
where $s_i$, $i=k+1,...,n$ is non-increasing and $s_i \in [0,\min(\tx_k,\sqrt{g_{k+1}})]$.
In this case we also have 
\begin{equation}
\begin{split}f^{**}(\x) = \sum_{i=1}^k c_i(\tx_i) & = \sum_{i=1}^k 2\tx_i \tz_i - \max(\tx_i^2 -g_i,0) \\ & =  \sum_{i=1}^k g_i+\tx_i^2 = f(\x). \end{split}
\end{equation}
\end{lemma}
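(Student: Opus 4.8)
The plan is to reduce the joint maximization~\eqref{eq:zmax} to the separable analysis already carried out for the scalar functions $c_i(\tz_i)$ in~\eqref{eq:singvalfun}, and then check that the constraint-free optimizers can be arranged to respect the ordering constraint on $\btz$. First I would recall that the objective in~\eqref{eq:zmax} splits as $\sum_{i=1}^n c_i(\tz_i)$, so that if we \emph{ignore} the requirement that $\btz$ be sorted, the maximum is simply the sum of the scalar maxima described just above the lemma statement, attained at the unconstrained maximizers $u_i^*$. The key observation is that under the hypothesis $\tx_i \notin (0,\sqrt{g_i})$ the ``ambiguous'' middle case ($i>k$, $\tx_i \neq 0$, forcing $\tz_i = \sqrt{g_i}$) never occurs: every index $i\le k$ has $u_i^* = \tx_i$, and every index $i>k$ has $\tx_i = 0$, hence $u_i^*$ can be taken to be \emph{any} value in $[0,\sqrt{g_i}]$.

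Next I would verify that among these unconstrained optimizers there exists one that is non-increasing, which then also solves the constrained problem (since enlarging the feasible set cannot decrease the maximum, and we exhibit a feasible point achieving the unconstrained value). For $i\le k$ we are forced to take $\tz_i^* = \tx_i$, and $\btx$ is non-increasing by assumption, so the first block is fine. For $i = k+1,\dots,n$ we have total freedom inside $[0,\sqrt{g_i}]$; since $g_i$ is non-decreasing, $\sqrt{g_{k+1}} \le \sqrt{g_i}$ for all such $i$, so we may pick any non-increasing sequence $s_i$ with values in $[0,\sqrt{g_{k+1}}]$. To glue the two blocks together without a jump up at the index $k\to k+1$, we additionally require $s_{k+1} \le \tx_k$; combined with the previous bound this gives $s_i \in [0,\min(\tx_k,\sqrt{g_{k+1}})]$, exactly as in~\eqref{eq:redsform}. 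Any such choice is feasible and attains the separable optimum, hence is a maximizer of~\eqref{eq:zmax}; conversely any maximizer must maximize each $c_i$ separately, so its first $k$ entries equal $\tx_i$ and its tail lies in $[0,\sqrt{g_{k+1}}]$ and is non-increasing, which pins down the stated form.

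Finally, to compute $f^{**}(\x)$ I would just substitute this maximizer back into~\eqref{eq:zmax}. The tail terms $i>k$ contribute $2\tx_i s_i - \max(s_i^2 - g_i,0) = 0$ because $\tx_i = 0$ and $s_i^2 \le g_{k+1} \le g_i$. The head terms contribute $\sum_{i=1}^k c_i(\tx_i) = \sum_{i=1}^k \bigl(2\tx_i^2 - \max(\tx_i^2 - g_i,0)\bigr)$, and since $\tx_i \ge \sqrt{g_i}$ for $i\le k$ this is $\sum_{i=1}^k (2\tx_i^2 - \tx_i^2 + g_i) = \sum_{i=1}^k (\tx_i^2 + g_i) = f(\x)$, giving the last display.

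I expect the only genuinely delicate point to be the gluing step: arguing cleanly that although an unconstrained maximizer of $\sum_i c_i$ need not be sorted, one \emph{can always} be chosen sorted (using precisely that the free indices sit in intervals $[0,\sqrt{g_i}]$ that are nested the right way, and that the forced head $\tx_1 \ge \dots \ge \tx_k \ge \sqrt{g_k}$ already dominates $\sqrt{g_{k+1}}$ only up to the extra $\tx_k$-bound), and hence that the constrained and unconstrained maxima coincide. Everything else is the bookkeeping of plugging in and using $\tx_i \ge \sqrt{g_i}$ on the support.
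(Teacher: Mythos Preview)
Your proposal is correct and follows essentially the same approach as the paper: the paper's argument is the informal discussion immediately preceding the lemma, which notes that the hypothesis $\tx_i \notin (0,\sqrt{g_i})$ eliminates the ``middle'' case $u_i^* = \sqrt{g_i}$, so the unconstrained per-coordinate maximizers can be chosen to respect the ordering constraint, and then $f^{**}(\x)=f(\x)$ follows by direct substitution. Your write-up simply spells out the gluing step and the converse direction more explicitly than the paper does.
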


Before we proceed to the general case we note that if $\tx_i \in (0,\sqrt{g_i})$ for some $i$ then 
\begin{equation}
c_i(u^*_i) = \begin{cases}
\tx_i^2 + g_i & i \leq k \\
2\tx_i\sqrt{g_i} & i > k, \tx_i \neq 0 \\
0 & i > k, \tx_i = 0 
\end{cases}.
\end{equation}
Since $2 \tx_i\sqrt{g_i} < \tx_i^2 + g_i$ if $x_i < \sqrt{g_i}$ it is clear that this implies that
\begin{equation}
f^{**}(\x) < \sum_{i=1}^{\card(\x)} g_i + \tx_i^2 = f(\x) .
\end{equation}

For the general case the unconstrained minimizers are not non-increasing.
To handle this we consider the best value $\tz_i^*$ given values for its neighbors $\tz^*_{i-1}$ and $\tz^*_{i+1}$. 
It is clear from the figures above that if the unconstrained minimizer $u_i^*$ is unique then the best choice is of $\tz^*_i$ is 
\begin{equation}
\tz^*_i = 
\begin{cases}
\tz^*_{i+1} &  u^*_i \leq \tz^*_{i+1} \\
u^*_{i} & u^*_i \in [\tz^*_{i+1},\tz^*_{i-1}] \\
\tz^*_{i-1} & u^*_i \geq \tz^*_{i-1} \\
\end{cases}.
\end{equation}
Here we have adopted the convention that $\tz_0 = \infty$ and $\tz_{n+1} = 0$.
In the non-unique case we similarly have that $\tz_i^* \in [0,\sqrt{g_i}] \cap [\tz^*_{i+1},\tz^*_{i-1}]$ if this intersection is non-empty or $\tz_i^* = \tz_{i+1}^*$.

\begin{lemma} Suppose that $\{u^*_i\}$ is not monotone for all $i$ such that $\tx_i = 0$.
	Let $p$ be defined so that the sequence $\{u^*_i\}$ is non-increasing for $i \leq p$ and non-decreasing for $i > p$, whenever $\tx_i\neq 0$.
The constrained maximizers $\tz_i^*$ will then fulfill
\begin{equation}
\tz^*_i \in \begin{cases}
\max(u^*_i,\tz^*_{i+1}) & i \leq p \\
\tz^*_{i+1} & i > p,\tx_i \neq 0 \\
[0,\min(u^*_i,\tz^*_{i-1})] & i > p, \tx_i = 0 \\
\end{cases}.
\label{eq:zrec}
\end{equation}
\end{lemma}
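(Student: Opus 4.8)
The goal is to characterize the maximizers $\tz_i^*$ of \eqref{eq:zmax} in the general case where the unconstrained maximizers $\{u_i^*\}$ fail to be monotone. The plan is to treat \eqref{eq:zmax} as a concave maximization subject to the ordering constraint $\tz_1 \geq \tz_2 \geq \dots \geq \tz_n \geq 0$ and to analyze it via the coordinatewise update rule already displayed just before the statement, namely that the best $\tz_i^*$ given its neighbors $\tz_{i-1}^*,\tz_{i+1}^*$ is the projection of $u_i^*$ onto $[\tz_{i+1}^*,\tz_{i-1}^*]$ (with the obvious modification in the non-unique case). Since each $c_i$ is concave, a feasible point is globally optimal iff it satisfies this fixed-point condition for every $i$, so it suffices to verify that \eqref{eq:zrec} is the unique such fixed point.

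First I would isolate the index $p$: by hypothesis $\{u_i^*\}$ is non-increasing for $i \le p$ and non-decreasing for $i > p$ among the indices with $\tx_i \ne 0$, while indices with $\tx_i = 0$ contribute the ``free'' set $[0,\sqrt{g_i}]$ and hence never obstruct monotonicity. For $i \le p$, the unconstrained values are already in the right order relative to each other, so the only binding constraint is the lower one coming from $\tz_{i+1}^*$; projecting $u_i^*$ onto $[\tz_{i+1}^*,\tz_{i-1}^*]$ with $\tz_{i-1}^* \geq u_{i-1}^* \geq u_i^*$ therefore gives $\tz_i^* = \max(u_i^*,\tz_{i+1}^*)$, which is the first branch. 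For $i > p$ with $\tx_i \ne 0$, the sequence $u_i^*$ is non-decreasing, so once we are past $p$ the optimal values must be pushed down to meet the ordering constraint from above; the standard pool-adjacent-violators reasoning shows the constraint $\tz_i \le \tz_{i-1}$ is active, and combined with the recursion this collapses to $\tz_i^* = \tz_{i+1}^*$, the second branch. For $i > p$ with $\tx_i = 0$ the objective $c_i$ is flat on $[0,\sqrt{g_i}]$ (equals $-\max(\tz_i^2-g_i,0)$ evaluated there, which is $0$), so any value in $[0,\sqrt{g_i}] \cap [\tz_{i+1}^*,\tz_{i-1}^*]$ is optimal; since $u_i^* \in [0,\sqrt{g_i}]$ is the relevant endpoint, this is the interval $[0,\min(u_i^*,\tz_{i-1}^*)]$ of the third branch, which is non-empty because $\tz_{i-1}^* \ge 0$.

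I would organize the argument as a downward induction on $i$ from $n$ to $1$, at each step using the displayed neighbor-wise optimality rule together with the monotonicity pattern of $\{u_i^*\}$ to pin down which constraint is active; the base case $i = n$ uses the convention $\tz_{n+1}^* = 0$, and the case $i = 1$ uses $\tz_0^* = \infty$ so that the upper constraint is vacuous. The main obstacle is the bookkeeping around indices with $\tx_i = 0$ interleaved with the others: one must argue carefully that such ``free'' indices can always be assigned a value inside the allowed interval without forcing any neighboring constraint to change its active/inactive status, so that the clean split at $p$ genuinely governs the behavior. This is exactly the point where concavity (flatness of $c_i$ on $[0,\sqrt{g_i}]$ when $\tx_i=0$) does the work, and it is also where one should double-check that the resulting set of optimizers is precisely \eqref{eq:zrec} and not merely contained in it.
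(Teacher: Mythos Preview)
Your plan is essentially the paper's: both arguments rest on the coordinatewise projection rule $\tz_i^* = \text{proj}_{[\tz_{i+1}^*,\tz_{i-1}^*]}(u_i^*)$ and the monotonicity pattern of $\{u_i^*\}$ around $p$, and both handle the three branches by identifying which of the two ordering constraints is active. The ingredients you list---the inequality $\tz_{i-1}^*\ge u_{i-1}^*\ge u_i^*$ for $i\le p$, the ``collapse'' $\tz_i^*=\tz_{i+1}^*$ for $i>p$ with $\tx_i\ne 0$, and the flatness of $c_i$ on $[0,\sqrt{g_i}]$ when $\tx_i=0$---are exactly what the paper uses.

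The one place your organization slips is the direction of the induction. A uniform downward sweep from $i=n$ to $i=1$ does not work cleanly: for the first branch you need $\tz_{i-1}^*\ge u_{i-1}^*$, and that inequality is a \emph{consequence} of the formula $\tz_{i-1}^*=\max(u_{i-1}^*,\tz_i^*)$, which in a downward scheme has not yet been established at step $i$. The paper therefore runs a \emph{forward} induction for $i\le p$, starting from $\tz_0^*=\infty$ so that $\tz_1^*=\max(u_1^*,\tz_2^*)\ge u_1^*$, and propagating $\tz_{i-1}^*\ge u_{i-1}^*\ge u_i^*$ to rule out the upper endpoint at each step. Symmetrically, the third branch references $\tz_{i-1}^*$, so it cannot be the base of a downward sweep either; the paper handles those tail indices directly by observing that one may set $\tz_i=\dots=\tz_n=0$ to achieve $c_i=\dots=c_n=0$, which forces any optimizer into $[0,\sqrt{g_i}]\cap[0,\tz_{i-1}^*]$. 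Once you reverse the induction for $i\le p$ and treat the $\tx_i=0$ tail separately, your argument coincides with the paper's.
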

\begin{proof}
We first consider $i > k$ with $\tx_i = 0$. 
Since $\tx_i$ is non-increasing it is clear we can make $c_i(\tz_i) = ...= c_n(\tz_n) = 0$ by letting $\tz_i = ... = \tz_{n} = 0$, regardless of what $\tz_{i-1}$ is. Any optimal solution therefore has to have $c_i(\tz_i) = ...= c_n(\tz_n) = 0$,
which is achieved when $\tz_i^* \in [0,\min(u^*_i,\tz^*_{i-1})]$.

Since $\tz_0^* = \infty$ we have that $u_1^* \notin [\tz^*_{0},\tz_{2}^*]$ if and only if $\tz^*_{2} > u_1^*$. Therefore it is clear that $\tz^*_1 = \max(u_1^*,\tz^*_2)$. 
Now suppose $\tz^*_{i-1} = \max(u_{i-1}^*,\tz^*_{i})$ for $i \leq p$ then 
$\tz^*_{i-1} \geq u_{i-1}^* \geq u_i^*$, which means that either 
$u^*_i \in [\tz^*_{i+1},\tz^*_{i-1}]$, in which case $\tz^*_i = u_i^*$, or $u^*_i \leq \tz^*_{i+1}$ which gives $\tz_i^* = \tz_{i+1}^*$. This proves the first case in \eqref{eq:zrec}.

Now suppose that $i\geq p+1$ has $\tx_i \neq 0$. If $\tz_i^* \leq u_i^*$ then $\tz^*_{i} \leq u^*_{i+1}$ since $\{u_i^*\}$ is not decreasing and therefore $\tz^*_{i+1} = \tz^*_{i}$ according to \eqref{eq:zrec}. If $\tz^*_i > u_i^*$ we again have $\tz^*_{i+1} = \tz^*_{i}$ according to \eqref{eq:zrec}.
\end{proof}

\begin{corollary}\label{cor:minimizers}
The constrained minimizers $\tz_i^*$ can be written
\begin{equation}
\tz^*_i = \begin{cases}
\max(u^*_i,s) & i \leq p \\
s & i > p,\tx_i \neq 0 \\
s_i & i > p, \tx_i = 0 \\
\end{cases}.
\label{eq:sform}
\end{equation}
Here $u^*_p \leq s$, $\{s_i\}$, $i=k+1,...,n$ is non-increasing and $s_i \in [0,\min(\sqrt{g_i},s)]$.
\end{corollary}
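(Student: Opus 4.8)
The plan is to obtain the closed form \eqref{eq:sform} by unwinding the recursion \eqref{eq:zrec} of the previous lemma, read from the middle of the index set outwards, and then to supply by a short optimality argument the one ingredient that \eqref{eq:zrec} (being only a collection of necessary conditions) does not contain. Throughout set $m=\card(\x)$, so that $\tx_i\neq 0$ exactly for $i\le m$; recall that the valley index obeys $k\le p\le m$, that $u^*_1\ge\cdots\ge u^*_p$ while $u^*_p\le u^*_{p+1}\le\cdots$, and that $u^*_i=\sqrt{g_i}$ whenever $k<i\le m$.

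First I would collapse the middle block $p<i\le m$. There the second line of \eqref{eq:zrec} reads $\tz^*_i=\tz^*_{i+1}$, so telescoping gives $\tz^*_{p+1}=\cdots=\tz^*_m$; call this common value $s$ (if $p=m$ this block is empty and I set $s:=u^*_p$, justified below). Next I would induct downwards along $i=p,p-1,\dots,1$: the first line of \eqref{eq:zrec} is $\tz^*_i=\max(u^*_i,\tz^*_{i+1})$, and since $u^*_i\ge u^*_{i+1}\ge\cdots\ge u^*_p$ on $[1,p]$ the nested maxima collapse to $\tz^*_i=\max(u^*_i,\dots,u^*_p,s)=\max(u^*_i,s)$, the first line of \eqref{eq:sform}. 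Finally, for the tail $i>m$ (equivalently $i>p$ with $\tx_i=0$) the third line of \eqref{eq:zrec} gives $s_i:=\tz^*_i\in[0,\min(\sqrt{g_i},\tz^*_{i-1})]$; an upward induction, starting from $\tz^*_m=s$ and using $\tz^*_{i-1}=s_{i-1}$ for $i-1>m$, shows at once that $\{s_i\}$ is non-increasing and that $\tz^*_{i-1}\le s$, hence $s_i\in[0,\min(\sqrt{g_i},s)]$.

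The only part of the corollary not already contained in \eqref{eq:zrec} is the inequality $u^*_p\le s$. I would prove it by exploiting that the objective in \eqref{eq:zmax} is the separable concave sum $\sum_i c_i(\tz_i)$ of the functions in \eqref{eq:singvalfun}, that for $p<i\le m$ the function $c_i(t)=2\tx_i\,t$ is strictly increasing on $[0,\sqrt{g_i}]$, and that $u^*_p\le u^*_{p+1}=\sqrt{g_{p+1}}\le\sqrt{g_i}$ there. If one had $s<u^*_p$, then replacing the flat value $s$ by $u^*_p$ would leave every $\tz^*_i=\max(u^*_i,s)=u^*_i$ with $i\le p$ unchanged (since $u^*_i\ge u^*_p$), keep the tail feasible with its contributions $c_i(s_i)=0$ unchanged, preserve monotonicity of $\btz^*$, and strictly increase $\sum_{i=p+1}^m c_i$ — contradicting optimality. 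Hence $s\ge u^*_p$. When $p=m$ the same perturbation applied to the first tail coordinate forces $\tz^*_{m+1}\le u^*_p$, so $\tz^*_p=\max(u^*_p,\tz^*_{m+1})=u^*_p=s$ and everything above goes through verbatim.

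I expect this exchange step to be the real obstacle: it is the one place where genuine optimality of $\btz^*$ — not merely the structural recursion — must be invoked, and one has to track carefully which neighbouring coordinates and which ordering constraints are affected when the flat value is moved. The converse — that every vector of the form \eqref{eq:sform} meeting the stated constraints is in fact a maximizer of \eqref{eq:zmax} — then follows immediately, since each coordinate maximizes its own $c_i$ subject to the ordering constraint, exactly as in the preceding lemma; and the matrix version is identical after replacing magnitudes by singular values via von Neumann's trace inequality.
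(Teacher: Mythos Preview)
Your proposal is correct and follows essentially the same route as the paper: telescope the middle block from the recursion, unwind the first block downward using monotonicity of $u^*_i$ on $[1,p]$, read off the tail, and then invoke optimality (via the monotonicity of the $c_i$ in $s$) to obtain $u^*_p\le s$. Your exchange argument for $u^*_p\le s$ is in fact a cleaner and more explicit version of the paper's one-line remark that ``all residuals $c_i$ are non-decreasing with $s$ when $s<u^*$.''

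One small wobble: in the edge case $p=m$ your choice $s:=u^*_p$ and the claimed ``forcing'' of $\tz^*_{m+1}\le u^*_p$ do not quite work, since tail coordinates contribute zero to the objective and are therefore unconstrained by optimality beyond $s_i\le\sqrt{g_i}$. The clean fix is simply to set $s:=\tz^*_p=\max(u^*_p,\tz^*_{m+1})$ in that case (as the paper effectively does by defining $s:=\tz^*_p$ from the outset); then $u^*_p\le s$ is automatic and the tail bound $s_i\le s$ follows from monotonicity of $\btz^*$.
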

\begin{proof}
The first two cases in \eqref{eq:sform} are fairly obvious. First it is clear that $s := \tz^*_{p} = \tz^*_{i} $, for all $i>p$ with $\tx_i \neq 0$, which is the middle case in \eqref{eq:sform}. 
Next we see that $\tz^*_{p-1} = \max(u^*_{p-1},\tz^*_k) = \max(u^*_{p-1},\max(u^*_{p},s)) = \max(u^*_{k-1},s)$ since $u^*_{p-1} \geq u^*_{p}$.
Repeating the same argument again shows the first case in \eqref{eq:sform}.

Finally we note that $s_i$ non increasing and $s_i \in [0,\min(\sqrt{g_i},s)]$ implies that $s_i \leq s$ and therefore also that
$s_i \in [0,\min(\sqrt{g_i},s)]\cap [0,s_{i-1}] = [0,\min(\sqrt{g_i},s_{i-1})]$, which shows the third case of \eqref{eq:sform}. 
 
To see that $u^*_p \leq s$ we note that all residuals $c_i$ are non-decreasing with $s$ when $s < u^*$.
\end{proof}

\section{Proof of Theorem~\ref{thm:fixedpoint}}\label{app:fixedpointproof}
In this section we give the proof of Theorem~\ref{thm:fixedpoint} which shows that "fixed cardinality/rank" solutions are stationary in our relaxation~\eqref{eq:vecrelax}. 
The proofs for vector and matrix cases are somewhat different and therefore we treat them separately.
\subsection{The vector case}

\begin{proof}[Proof of Theorem~\ref{thm:fixedpoint}]
The objective function of \eqref{eq:vecrelax} can be written $f^{**}(\x)+h(\x)$ where $h(\x)= - \|x\|^2+\|Ax-b\|^2$.
A stationary point therefore fulfills $-\nabla h(x) \in \partial f^{**}(x)$.
We have $\nabla h(\x) = -2\x + 2 A^T(A \x - \b)$ which yields
\begin{equation}
-A^T (A \x - \b) = \z-\x,	
\end{equation}
where $2\z \in \partial f^{**}(\x)$.
Now suppose that $\o$ fulfills the requirements of the theorem and let S be the set of nonzero elements of $\o$.
The sub-differential $\partial f^{**}(\o)$ consists of the maximizing $\z$-vectors given in Lemma~\ref{lemma:knownz}.
The the vector $\z-\o$ is zero for every element in $S$. To see that the same is true for $A^T(A \o - \b)$ we note that 
\begin{equation}
	\o = \argmin_{\supp(\x)=S} \|A\x - \b\|^2 = \argmin_{\supp(\x)=S} \|A_S \x - \b\|^2,
\end{equation}
where $A_S$ is constructed by taking $A$ and setting the columns no in $S$ to zero. 
Therefore the normal equations $A_S^T (A_S \o - \b)= A_S^T(A \o - \b)=0$ hold which shows that the elements of $A^T(A\o-\b)$ that are in $S$ all vanish.

It now remains to show that the elements in the complement of $S$ are smaller than $\min\{\tx_k,\sqrt{g_{k+1}}\}$. This is however clear since by assumption
\begin{equation}
	\|A^T (A^T\o - b)\| \leq \|A\| \|\eps\| \leq \min\{\tx_k,\sqrt{g_{k+1}}\}.
\end{equation}
We remark that estimating the size of the elements by the vector norm is a simple but very crude estimation and the result is therefore likely to hold under much more generous conditions. 
\end{proof}

\subsection{The matrix case}

\begin{proof}[Proof of Theorem~\ref{thm:fixedpoint}.]
Similar to the vector case we need to show that 
\begin{equation}
	-\A^*  (\A \O - \b) = Z-\O,
\end{equation}
where $2 Z \in \partial f^{**}(\O)$. The matrix $Z$ is in the sub differential of $f^{**}(\O)$ if we can find orthogonal matrices $U$ and $V$ such that $\O = U D_\bto V^T$ and $Z = U D_\btz V^T$. Here $\bto$ and $\btz$ are the singular values of $\O$ and $Z$ respectively. The matrices $D_\bto$ and $D_\btz$ are diagonal matrices with elements $\bto$ and $\btz$.
Note that $\O$ is typically of low rank $D_\bto$ and $D_\z$ can be partitioned into block matrices
\begin{equation}
	D_\bto = \begin{bmatrix}
		\Sigma & 0 \\
		0 & 0
	\end{bmatrix} 
\text{ and }
	D_\btz= \begin{bmatrix}
		\Sigma & 0 \\
		0 & \Delta
	\end{bmatrix}. 
\end{equation}
Here $\Sigma$ contains the $k$ non-zero singular values of $\O$. Due to Lemma~\ref{lemma:knownz} the $D_\btz$ also contains this block. The matrix $\Delta$ contains the singular values of $\btz$ that correspond to zeros in $\bto$.
We can make a corresponding partition of the $U$ and $V$ matrices into 
\begin{equation}
U = \begin{bmatrix}
	\bar{U} & \bar{U}_\perp
\end{bmatrix}
\text{ and }
V = \begin{bmatrix}
	\bar{V} & \bar{V}_\perp
\end{bmatrix},
\end{equation}
where $\bar{U}$ and $\bar{V}$ are the first $k$ columns of $U$ and $V$ respectively.
Note that only $\bar{U}$ and $\bar{V}$ are uniquely determined by $\O$. The matrices $\bar{U}_\perp$ and $\bar{V}_\perp$ can be selected arbitrarily as long as they are orthogonal to $\bar{U}$ and $\bar{V}$ respectively.
Any choice of $\bar{U}$, $\bar{V}$ and $\Delta$, where the elements of $\Delta$ are less than $\min\{\to_k,\sqrt{g_{k+1}}\}$ gives us a $Z$ that is in the sub differential. 
Consequently we have 
\begin{equation}
	Z - \O = \begin{bmatrix}
		\bar{U} & \bar{U}_\perp 
	\end{bmatrix}
	\begin{bmatrix}
		0 & 0 \\
		0 & \Delta
	\end{bmatrix}
\begin{bmatrix}
	\bar{V}^T \\ \bar{V}^T_\perp
\end{bmatrix} = \bar{U}_\perp \Delta \bar{V}^T_\perp.
\end{equation}
\end{proof}
We now consider term $\A^*  (\A \O - \b)$. We have
\begin{equation} \begin{split}
\|\A(\O+tH) - \b\|^2  = &  t^2\|\A H\|^2 + 2t\skal{H, \A^* (\A \O - \b)} \\ & + \|\A \O - \b\|^2. \end{split}
\end{equation}
Recall that $\O$ minimizes the left hand side over all matrices with rank at most $k$. Since the linear term dominates the quadratic one for small $t$ we must have
\begin{equation}
	\skal{H, \A^* (\A \O - \b)} \geq 0
\end{equation}
for all $H$ such that $\rank(\O+tH) \leq k$. Since $\Sigma$ has full rank it is clear that any matrix of the form
\begin{equation}
	H = \begin{bmatrix}
		\bar{U} & \bar{U}_\perp 
	\end{bmatrix}
	\begin{bmatrix}
		H_{11} & H_{12} \\
		H_{21} & 0
	\end{bmatrix}
	\begin{bmatrix}
		\bar{V}^T \\ \bar{V}^T_\perp
	\end{bmatrix}
\end{equation}
fulfills this requirement. It is now easy to see that 
\begin{equation}
 -\A^* (\A \O - \b) = U_\perp M V_\perp^T,
\end{equation}
where $M$ is some matrix. Furthermore since $U_\perp$ and $V_\perp$ can be selected freely (as long as they are perpendicular to $U$ and $V$ respectively) we can assume that $M$ is diagonal. What remains is therefore to estimate its singular values, which similarly to the vector case is done by
\begin{equation}
	\|\A^*(\A O - \b)\|_2 \leq \|\A\| \|\eps\| \leq \min\{\to_k,\sqrt{g_{k+1}}\}.
\end{equation}

\section{Proof of Theorem~\ref{thm:statpts}}\label{app:main}

In this section we prove our main theorem. 
The proof requires a growth estimate of the subgradients of $f^{**}$ which we give in the following lemmas.
\begin{lemma}
If $\z \in \partial f^{**}(\x)$ and $\z' \in \partial f^{**}(\x')$ and $d\leq 1$ then 
\begin{equation}
\skal{\z'-\z,\x'- \x} > d\|\x'-\x\|^2,
\label{eq:skal1} 
\end{equation}
if
\begin{equation}
\skal{\pi \btz'-\btz,\pi\btx'- \btx} > d\|\pi\btx'-\btx\|^2,
\end{equation}
for all permutation matrices $\pi$.
\end{lemma}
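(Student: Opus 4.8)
The plan is to reduce \eqref{eq:skal1} to the stated hypothesis through an orthogonal change of coordinates followed by a sign–monotonicity argument that uses only the non-negativity of the sorted magnitudes. First I would write $\x = D_\s\pi\btx$ and $\z = D_\s\pi\btz$ with the \emph{same} pair $(\s,\pi)$; this is legitimate because, by the description of the subdifferential in Lemma~\ref{lemma:knownz} and Corollary~\ref{cor:minimizers}, any $\z\in\partial f^{**}(\x)$ can be written respecting the sign pattern and ordering of $\x$. Doing the same for $\x'=D_{\s'}\pi'\btx'$, $\z'=D_{\s'}\pi'\btz'$ and then applying the orthogonal map $O=\pi'^T D_{\s'}$ to all four vectors preserves every inner product and norm occurring in \eqref{eq:skal1}, sends $\x'\mapsto\btx'$, $\z'\mapsto\btz'$, and sends $\x\mapsto T\btx$, $\z\mapsto T\btz$, where $T=\pi'^T D_{\s'}D_\s\pi$ is a signed permutation matrix. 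Hence \eqref{eq:skal1} is equivalent to $\skal{\btz'-T\btz,\ \btx'-T\btx}>d\,\|\btx'-T\btx\|^2$.

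The key structural input is that the maximizing vector in \eqref{eq:zmax} always satisfies $\btz\geq\btx\geq 0$ componentwise, as is visible from the shape of the functions $c_i$ in Figure~\ref{fig:singvalfun} and the constrained maximizers of Corollary~\ref{cor:minimizers}. Setting $\a=\btz-\btx\geq 0$ and $\a'=\btz'-\btx'\geq 0$ and expanding, the left-hand side splits as $\skal{\btz'-T\btz,\ \btx'-T\btx}=\|\btx'-T\btx\|^2+\skal{\a'-T\a,\ \btx'-T\btx}$, so it suffices to prove $\skal{\a'-T\a,\ \btx'-T\btx}>(d-1)\|\btx'-T\btx\|^2$; here $d-1\leq 0$, and the quadratic term has been peeled off by an identity rather than an inequality. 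The same manipulation rewrites the hypothesis as $\skal{\pi\a'-\a,\ \pi\btx'-\btx}>(d-1)\|\pi\btx'-\btx\|^2$ for every permutation matrix $\pi$.

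Then I would take $P=|T|$, the permutation matrix of entrywise absolute values, and invoke the last inequality with $\pi=P^{-1}$. Since $\btx,\btx',\a,\a'$ are non-negative and $P_{ij}\geq T_{ij}$ entrywise, one checks the two elementary bilinear estimates $\skal{\a'-T\a,\ \btx'-T\btx}\geq\skal{P^{-1}\a'-\a,\ P^{-1}\btx'-\btx\,}$ and $\|P^{-1}\btx'-\btx\|^2\leq\|\btx'-T\btx\|^2$ (both reduce to $\skal{u,(P-T)v}\geq 0$ for non-negative $u,v$). Chaining these with the rewritten hypothesis at $\pi=P^{-1}$ gives $\skal{\a'-T\a,\ \btx'-T\btx}\geq\skal{P^{-1}\a'-\a,\ P^{-1}\btx'-\btx\,}>(d-1)\|P^{-1}\btx'-\btx\|^2\geq(d-1)\|\btx'-T\btx\|^2$, the last step using $d-1\leq 0$; adding $\|\btx'-T\btx\|^2$ back produces exactly the required inequality. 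The borderline case $d=1$ and the degenerate case $\btx'=T\btx$ cause no trouble because the hypothesis is strict.

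I expect the main obstacle to be precisely the sign flips carried by $T$: in \eqref{eq:skal1} itself a sign flip can simultaneously shrink the left-hand inner product and enlarge $\|\x'-\x\|^2$, so one cannot pass from $T$ to $|T|$ directly. The device that resolves this is the decomposition $\btz=\btx+\a$, which routes the ``magnitude'' content of $\btz$ into the quadratic term $\|\btx'-T\btx\|^2$, handled by an identity, and confines all sign-sensitive contributions to the bilinear form in the non-negative excess vectors $\a,\a'$, on which replacing $T$ by $|T|$ is harmless.
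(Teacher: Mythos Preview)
Your argument is correct. Both your proof and the paper's hinge on the same structural fact, namely that $\btz\ge\btx\ge 0$ componentwise (which you justify via the shape of the $c_i$'s and which the paper uses implicitly when writing $\tz'_i\ge\tx'_i$), and both conclude that the ``worst'' sign pattern is the trivial one, reducing the claim to the permutation hypothesis. The organization is different, however. The paper expands $\skal{\z'-\z,\x'-\x}-d\|\x'-\x\|^2$ as $C-\skal{\z'-d\x',\x}-\skal{\z-d\x,\x'}$, observes that $C$ is sign-independent, and argues that the cross terms are maximized when the sign patterns of $\x$ and $\x'$ agree (since $\z-d\x$ inherits the sign of $\x$ from $\tz_i\ge\tx_i\ge d\tx_i$). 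You instead pass to coordinates in which $\x',\z'$ are already non-negative, peel off the quadratic $\|\btx'-T\btx\|^2$ via the decomposition $\btz=\btx+\a$, and then compare the signed permutation $T$ with its absolute value $P$ through the bilinear estimate $\skal{u,(P-T)v}\ge 0$. Your route is a bit longer but makes the sign step completely explicit; the paper's route is terser but leaves the ``clearly maximized when the signs match'' step to the reader. Either way, the choice $\pi=P^{-1}$ (equivalently: drop the signs) is exactly the permutation that realizes the reduction.
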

\begin{proof}
	We have that \eqref{eq:skal1} can be written
\begin{equation}
C-\skal{\z'-d \x', \x} - \skal{\z-d \x,\x'} > 0,
\label{eq:Cineq}
\end{equation}	 
where
\begin{equation}
C = \skal{\z'-d\x',\x'} + \skal{\z - d \x,\x}.
\end{equation}
Note that since the elements of $\z'$ and $\x'$ have the same signs and $\tz'_i \geq \tx'_i$ for all $i$ the term $C$ is independent of signs.
For fixed magnitudes and permutations the term $\skal{\z'-d \x', \x} + \skal{\z-d \x,\x'}$ is clearly maximized when $\z'$ and $\z$ have the same signs. In which case we have
\begin{equation}
\skal{\z'-\z,\x'- \x} = \skal{\pi \btz'-\btz,\pi\btx'- \btx}
\end{equation}
and $\|\x'-\x\|^2 = \|\pi\btx'-\btx\|^2$ for some permutation $\pi$.
\end{proof}

In the matrix case we have $Z \in \partial f^{**}(X)$ and $Z' \in \partial f^{**}(X')$. Recall that here $\btx$, $\btz$, $\btx'$, $\btz'$, are the singular values of the matrices $X$,$Z$,$X'$,$Z'$ respectively. The corresponding statement is then that
\begin{equation}
\skal{Z'-Z,X'- X} > d\|X'-X\|^2
\end{equation}
holds whenever \eqref{eq:Cineq} holds. 
The proof is however more complicated than the vector case. We therefore refer the reader to Proposition 4.5 \cite{carlsson2019biased} from which it is clear that the above statement holds.

We are now ready to establish the growth estimates on the directional derivatives needed to prove Theorem~\ref{thm:statpts}. We will first consider directional derivatives between points where the relaxation is tight, that is $f_g(\x) = f^{**}_g(\x)$. In the subsequent result we then relax this assumption to only be valid for one of the points (namely the stationary point we want to prove is unique). 

\begin{lemma}\label{lemma:subgradbnd}
Suppose that $2\z \in \partial f^{**}(\x)$ and $2\z' \in \partial f^{**}(\x')$, and that neither $\btx$ nor $\btx'$ have values in $(0,\sqrt{g_i})$. If the elements of $\btz$ fulfill
\begin{equation}
\tz_i \notin \left[(1-d) \sqrt{g_k},  \frac{\sqrt{g_k}}{(1-d)}\right] \text{ and }  \tz_{k+1} < (1-2d) \tz_{k}, \label{eq:singvalsep}
\end{equation}
where $k$ is defined so that $\tx_i \geq \sqrt{g_i}$ for $i \leq k$ and $\tx_i =0$ if $i>k$, then 
\begin{equation}
\skal{\z'-\z,\x'-\x} > d \|\x'-\x\|^2.
\label{eq:subgradgrowth}
\end{equation}
\end{lemma}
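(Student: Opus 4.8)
The plan is to reduce the claimed inequality $\skal{\z'-\z,\x'-\x} > d\|\x'-\x\|^2$ to a purely scalar statement about sorted magnitudes, using the previous lemma, and then establish that scalar statement via a case analysis on how the support of $\x'$ interacts with the truncation structure encoded by the separation hypothesis \eqref{eq:singvalsep}. By the preceding lemma it suffices to show, for every permutation $\pi$, that $\skal{\pi\btz'-\btz,\pi\btx'-\btx} > d\|\pi\btx'-\btx\|^2$; so I would fix such a $\pi$, write $\bty' = \pi\btx'$ and $\bfsigma' = \pi\btz'$ for brevity, and expand the difference $\skal{\bfsigma'-\btz,\bty'-\btx} - d\|\bty'-\btx\|^2$ coordinate by coordinate. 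For each index $i$ the summand is $(\sigma'_i - z_i)(y'_i - x_i) - d(y'_i - x_i)^2$, and I want to show the total is strictly positive.

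First I would record the elementary facts about $\btx,\btz,\btx',\btz'$ that the quadratic-envelope structure gives us: by Lemma~\ref{lemma:knownz} and the hypothesis $\tx_i\notin(0,\sqrt{g_i})$, we have $z_i = x_i$ for $i\le k$ and $z_i\in[0,\min(x_k,\sqrt{g_{k+1}})]$ for $i>k$ (and symmetrically for $\x'$ with its own cardinality $k'$); in particular $z_i = x_i$ whenever $x_i\ne 0$, and the same for the primed vectors. This kills the ``diagonal'' contribution from common large coordinates and isolates the interaction between the tail of one vector and the head of the other. The key quantitative inputs are then (i) the gap condition $\tz_{k+1} < (1-2d)\tz_k$, which separates the $k$-th largest $z$-value from the rest, and (ii) the threshold-avoidance condition $\tz_i\notin[(1-d)\sqrt{g_k}, \sqrt{g_k}/(1-d)]$, which prevents a near-cancellation when a coordinate of $\x'$ sits near its truncation threshold $\sqrt{g_i}$. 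I would split into the cases $k' \le k$ and $k' > k$; in each case the problematic summands are those indices where exactly one of $x_i, y'_i$ is nonzero, and for those one estimates $(\sigma'_i - z_i)(y'_i - x_i)$ from below using that $\sigma'_i \ge y'_i$ (resp.\ $z_i \ge x_i$) together with the size information above, while bounding $d(y'_i-x_i)^2$ from above.

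The main obstacle, as the text itself flags, is controlling the situation where the permutation $\pi$ changes the support pattern — i.e.\ where some coordinates of $\btx'$ get matched against the ``small'' tail positions of $\btx$ (indices $>k$) and vice versa. This is exactly where the second part of \eqref{eq:singvalsep}, $\tz_{k+1} < (1-2d)\tz_k$, must do its work: it guarantees that moving mass from the head to the tail incurs a strictly larger increase in $\skal{\z'-\z,\x'-\x}$ than $d$ times the squared displacement. I expect the cleanest route is to compare $\skal{\bfsigma'-\btz,\bty'-\btx}$ with $\skal{\btz'-\btz,\btx'-\btx}$ (identity permutation) and argue any non-identity $\pi$ only helps, and then to handle the identity case directly. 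Once the scalar inequality is proven for all $\pi$, the conclusion \eqref{eq:subgradgrowth} follows immediately, and the subsequent lemma (relaxing tightness to only one point) and Theorem~\ref{thm:statpts} itself (two stationary points both satisfy $2\z\in\partial f^{**}(\x)$ with $\z = (I-A^TA)\x + A^Tb$, so $\skal{\z'-\z,\x'-\x} = \|\x'-\x\|^2 - \|A(\x'-\x)\|^2 \le \delta_r\|\x'-\x\|^2$ when $\card(\x'-\x)\le r$, contradicting \eqref{eq:subgradgrowth} with $d=\delta_r$ unless $\card(\x') > r-k$) drop out by combining this growth bound with the RIP.
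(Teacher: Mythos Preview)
Your reduction to sorted magnitudes via the preceding lemma is correct, and you have correctly identified the three--way split of the sum according to whether the index lies in the support of $\btx$, of $\pi\btx'$, both, or neither. The ``both nonzero'' terms do indeed collapse to $(y'_i-\tx_i)^2$ since $\tz_i=\tx_i$ and $\sigma'_i=y'_i$ there, exactly as in the paper.

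The gap is in your handling of the cross terms. You propose to estimate each summand with $i\in\tilde I\setminus I'$ (or $i\in I'\setminus\tilde I$) \emph{individually}. This does not work: for $i\in\tilde I\setminus I'$ the summand is $\tx_i(\tx_i-\sigma'_i)$, and to get $\tx_i(\tx_i-\sigma'_i)>d\tx_i^2$ you would need $\sigma'_i<(1-d)\tx_i$. But $\sigma'_i$ is only bounded by $\min(\tx'_{k'},\sqrt{g_{k'+1}})$, and $\tx'_{k'}$ can be arbitrarily large --- the separation hypothesis \eqref{eq:singvalsep} says nothing about it. The inequality $\sigma'_i\ge y'_i$ you invoke goes in the wrong direction here. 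Symmetrically, for $j\in I'\setminus\tilde I$ you need an upper bound on $\tz_j$ relative to $y'_j$, and without more information on $y'_j$ this also fails individually.

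What the paper does instead is to \emph{pair} each $i\in\tilde I\setminus I'$ with some $j\in I'\setminus\tilde I$ and bound the pair jointly: from $z'_i\le x'_j$ (which holds because the preimage of $i$ under $\pi$ exceeds $k'$ while that of $j$ is at most $k'$) and from $\tz_j\le\tz_{k+1}<(1-2d)\tz_k\le(1-2d)\tx_i$, one gets via $ab\le(a^2+b^2)/2$ that
\[
\tx_i(\tx_i-z'_i)+x'_j(x'_j-\tz_j)>d(\tx_i^2+{x'_j}^2).
\]
This is precisely where the gap condition $\tz_{k+1}<(1-2d)\tz_k$ enters. Only the \emph{leftover} unpaired terms --- which exist when $k\neq k'$ and lie entirely on one side --- are then estimated individually, and it is for those that the threshold--avoidance condition $\tz_i\notin[(1-d)\sqrt{g_k},\sqrt{g_k}/(1-d)]$ is used (together with $\sqrt{g_{k'+1}}\le\sqrt{g_k}$ when $k'<k$, or $y'_j\ge\sqrt{g_{k'}}\ge\sqrt{g_k}$ when $k'>k$). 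Your case split $k'\le k$ versus $k'>k$ is the right way to organise these leftovers, but it does not replace the pairing step.

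Finally, your proposed ``cleanest route'' of reducing to the identity permutation and arguing that any other $\pi$ only helps is not what the paper does and is not justified; the paper handles every $\pi$ directly by the support decomposition above.
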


\begin{proof}
We need show that 
\begin{equation}
\skal{\pi \btz' - \btz,\pi \btx' - \btx} > d \|\pi \btx' - \btx\|^2,
\label{eq:permineq}
\end{equation}
where $\pi$ is a permutation.
For ease of notation let $ \z'= \pi \btz'$ and $\x' = \pi \btx'$.
We let the $\tI = \{i; \ \tx_i \neq 0\} = \{i; i \leq k\}$ and $I' = \{i; \ x'_i \neq 0\}$. Then
\begin{equation}
\begin{split}\skal{\z' - \btz,\x' - \btx} & =  \sum_{i \in \tI, i\in I'} (x'_i - \tx_i)^2 + \sum_{i \in \tI, i\notin I'} \tx_i(\tx_i-z'_i) \\ & + \sum_{i \notin \tI, i \in I'} x'_i(x'_i-\tz_i).\end{split}
\label{eq:skalsum}
\end{equation}
Note that 
\begin{equation} 
d\|\x'-\btx\|^2 = \sum_{i \in \tI, i\in I'} d(x'_i - \tx_i)^2 + \sum_{i \in \tI, i\notin I'}d \tx_i^2+ \sum_{i \notin \tI, i \in I'} d {x'_i}^2.
\label{eq:dnormsum}
\end{equation}
We first consider pairs of terms from the second and third sums of \eqref{eq:skalsum}. If $i \in \tI, i\notin I'$ and $j \notin \tI, j\in I'$ we have 
\begin{equation}
\tx_i(\tx_i-z'_i) + x'_j ( x'_j - \tz_j) = \tx_i^2 + {x'_j}^2 - \tx_i z'_i - x'_j \tz_j.
\end{equation}
Since $j \notin \tI$ and $i\in \tI$ we have $\tz_j < (1-2d)\tz_k \leq (1-2d)\tz_i = (1-2d)\tx_i$.
Similarly, since $j \in I'$ and $i \notin I'$ we have $z'_i \leq z'_j = x'_j$.
Therefore
\begin{eqnarray}
\tx_i z'_i \leq \tx_i x'_j \leq \frac{\tx_i^2 + {x'_i}^2}{2} & \text{ and } \\
\tx_j z_j < (1-2d) \tx_j x_i \leq (1-2d)\frac{x_i^2 + \tx_i^2}{2}
\end{eqnarray}
which gives
\begin{equation}
\tx_i(\tx_i-z'_i) + x'_j ( x'_j - \tz_j) < d(\tx_i^2 + {x'_i}^2).
\label{eq:eqtermbnd}
\end{equation}
If the number of elements in $\tI$ and $I'$ are the same the two last sums of \eqref{eq:skalsum} have the same number of terms.
Then \eqref{eq:eqtermbnd} shows that 
\eqref{eq:skalsum} larger is than \eqref{eq:dnormsum} since clearly $(x'_i-\tx_i)^2 > d (x'_i-\tx_i)^2$.
It therefore remains to consider the two cases when $I$ has more elements than $I'$ and vice versa. 

Let $k'$ be the number of elements in $I'$.
Suppose first that $\tI$ has more elements than $I'$, that is, $k>k'$.
Then the middle sums of \eqref{eq:skalsum} and \eqref{eq:dnormsum} have more terms than the third ones. Therefore we need to show that
\begin{equation}
\tx_i(\tx_i - z'_i) > d \tx_i^2,
\label{eq:firstsumest}
\end{equation}
for $i \in \tI$ and $i \notin I'$. Suppose that $\pi_{ij} = 1$, that is element $j$ of $\btz'$ is moved to element $i$ of $\z'$ by the permutation $\pi$. 
By Corollary~\ref{cor:minimizers} we have that $z'_i=\tz'_j \leq \tx'_{k'} \leq \sqrt{g_{k'}}\leq \sqrt{g_{k}}$. Since $i < k$ we have by assumption \eqref{eq:singvalsep} that $\tx_i = \tz_i > \frac{\sqrt{g_k}}{(1-d)}$.
Therefore 
\begin{equation}
\tx_i- z'_i = (1-d) \tx_i + d\tx_i-z'_i >\sqrt{g_k} + d\tx_i -z'_i \geq d\tx_i,
\end{equation} 
which gives \eqref{eq:firstsumest}.

Now suppose instead that $\tI$ has fewer elements than $I'$, that is, $k'>k$.
Then we need to show that
\begin{equation}
x'_i(x'_i -\tz_i) \geq d {x'_i}^2,
\label{eq:secondsumest}
\end{equation}
for $i \notin \tI$ and $i \in I'$. 
Suppose again that $\pi_{ij} = 1$, that is element $j$ of $\btz'$ is moved to element $i$ of $\z'$ by the permutation $\pi$. 
By Corollary~\ref{cor:minimizers} we have $x'_i = \tz'_j > \tz'_{k'} = \tx'_{k'} \geq \sqrt{g_{k'}}$. Furthermore by assumption \eqref{eq:singvalsep} we have $\tz_i < (1-d) \sqrt{g_k} \leq (1-d) \sqrt{g_{k'}}$ since $k < k'$.
Therefore
\begin{equation}
x'_i - \tz_i = (1-d)x'_i+dx'_i - \tz_i \geq (1-d)\sqrt{g_{k'}}+dx'_i - \tz_i > dx'_i.
\end{equation}
which gives \eqref{eq:secondsumest}.
\end{proof}

\begin{lemma}\label{lemma:subgradbnd2}
Suppose that $\x$ fulfills the assumptions of Lemma~\ref{lemma:subgradbnd}. If $2\z' \in \partial f^{**}(\x')$ (without any additional assumptions on the values of $\x'$ or $\z'$) then \eqref{eq:subgradgrowth} holds.
\end{lemma}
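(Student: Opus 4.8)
The plan is to reduce Lemma~\ref{lemma:subgradbnd2} to Lemma~\ref{lemma:subgradbnd} by decomposing $\x'$ into points at which the relaxation is tight. If $f(\x')=f^{**}(\x')$, then (by the equivalence recorded in Appendix~\ref{app:convenv}) $\btx'$ has no coordinate in $(0,\sqrt{g_i})$, so Lemma~\ref{lemma:subgradbnd} applies directly to the pair $(\x,\x')$ and we are done. Otherwise $f(\x')>f^{**}(\x')$, and Lemma~\ref{conj:decomp} supplies points $\{\x'^j\}$ and weights $\lambda_j\geq 0$ with $\sum_j\lambda_j=1$ such that $\x'=\sum_j\lambda_j\x'^j$, each satisfies $f^{**}(\x'^j)=f(\x'^j)$, and
\begin{equation}
\partial f^{**}(\x')\subseteq \bigcap_j \partial f^{**}(\x'^j).
\label{eq:subgradinclusion}
\end{equation}

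The next step is to invoke Lemma~\ref{lemma:subgradbnd} for each pair $(\x,\x'^j)$. The hypotheses on $\x$ and $\btz$ are unchanged; tightness at $\x'^j$ means $\btx'^j$ avoids the gaps $(0,\sqrt{g_i})$; and by \eqref{eq:subgradinclusion} the \emph{same} vector $2\z'$ lies in $\partial f^{**}(\x'^j)$, so it can play the role of the second subgradient. Hence for every $j$,
\begin{equation}
\skal{\z'-\z,\x'^j-\x} > d\,\|\x'^j-\x\|^2 .
\end{equation}
Finally I would recombine: multiply the $j$-th inequality by $\lambda_j$ and sum. Because $\sum_j\lambda_j=1$ the left side collapses to $\skal{\z'-\z,\x'-\x}$, while convexity of $\|\cdot\|^2$ (Jensen) gives $\sum_j\lambda_j\|\x'^j-\x\|^2 \geq \|\x'-\x\|^2$; since at least one $\lambda_j$ is positive and each term-wise inequality is strict, strictness survives, yielding \eqref{eq:subgradgrowth}. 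The matrix case is handled identically with $\btx,\btz,\btx',\btz'$ denoting singular values, replacing the vector computation of Lemma~\ref{lemma:subgradbnd} by Proposition 4.5 of \cite{carlsson2019biased}.

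I do not expect a genuine obstacle; the one point requiring care is checking that every hypothesis of Lemma~\ref{lemma:subgradbnd} transfers to each pair $(\x,\x'^j)$ — in particular that the subgradient inclusion from Lemma~\ref{conj:decomp} legitimately lets us reuse $\z'$, and that tightness of $f^{**}$ at $\x'^j$ is exactly the condition $\btx'^j\notin(0,\sqrt{g_i})$ needed there. Everything else is the standard convex-combination argument.
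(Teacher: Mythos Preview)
Your proof is correct and follows essentially the same approach as the paper: decompose $\x'$ via Lemma~\ref{conj:decomp} into points where $f^{**}=f$, use the subgradient inclusion to apply Lemma~\ref{lemma:subgradbnd} at each $\x'^j$ with the same $\z'$, and recombine using Jensen's inequality for $\|\cdot - \x\|^2$. Your version is slightly more careful in explicitly separating the tight case and noting why strictness is preserved, but the argument is the same.
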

\begin{proof}
By Lemma~\ref{conj:decomp} we have $f^{**}(\x') = \sum_j \lambda_j f^{**}(\x^j)$, where $\x^j$ are points where $f^{**}(\x^j) = f(\x^j)$, that is $\x^j$ has no elements in $(0,\sqrt{g_i})$. Then by Lemma \ref{lemma:subgradbnd}, for any $\z' \in \partial f^{**}(\x') \subset \bigcap_j \partial f^{**}(\x^j)$ we have 
\begin{equation}
\skal{\z'-\z,\x^j-\x}  > d \|\x^j - \x\|^2.
\end{equation}
By convexity of $\|(\cdot) - X\|^2$ we now get
\begin{equation}
\skal{\z'-\z,\x'-\x}  > d\sum_j \lambda_j \| \x^i -\x\|^2    \geq d \| \sum_j \lambda_j \x^j -\x\|^2 = d \|\x'-\x\|^2. 
\end{equation}
\end{proof}

\begin{proof}[Proof of Theorem ~\ref{thm:statpts}]
	We will show that $\nabla h(\x') = 2(I-A^T A)\x' + 2A^T b \notin \partial f^{**}(\x')$.
	Suppose that $2\z' \in \partial f^{**}(\x')$. Since $\x$ is stationary we have $2\z + \nabla h(\x) = 0$
	\begin{equation}
	\skal{\z' + \nabla h(\x'),\x'-\x} = \skal{\z' - \z,\x' - \x} + \skal{\nabla h(\x')-\nabla h(\x),\x'-\x}.
	\label{eq:scalarprod}
	\end{equation}
	For the second term we have
	\begin{equation} \begin{split}
	\skal{\nabla h(\x')-\nabla h(\x),\x'-\x} & = \|\x'-\x\|_F^2 - \|A (\x'-\x) \|^2 \\ & \leq \delta_r \|\x-\x'\|^2, \end{split}
	\end{equation}
	if $\rank(\x-\x') \leq r$, which clearly holds if $\card(\x') \leq r-k$.
	On the other hand we also have by Lemma~\ref{lemma:subgradbnd} that
	\begin{equation}
	\skal{\z'-\z,\x'-\x} > \delta_r \|\x'-\x\|^2,
	\end{equation}
	and therefore \eqref{eq:scalarprod} is positive and $\x'$ cannot be a stationary point.

	Suppose now that $\x$ is a point that has $\card(\x) < \frac{r}{2}$. We will consider the directional derivatives along the line $\x + t{\bm v}$, where ${\bm v} = \frac{\x' - \x}{\|\x' - \x\|_F}$.
	Since $f^{**}$ is convex (and finite) the directional derivative of the objective function exists and is given by
	\begin{equation}
	\sup_{\z' \in \partial f^{**}(\x+t{\bm v})}\skal{\z'+\nabla h(\x+t{\bm v}),{\bm v}}.
	\end{equation}
	Since the $\card({\bm v}) \leq r$ it is clear by the arguments above that this is positive.
\end{proof}

\section{Proof of Theorem~\ref{thm:sufficientcond}}

\begin{proof}\label{app:main2}
We will let $\x$ be a global solution to $\min_{\card(\x) \leq k} \|A\x-\b\|$ and show that this point will be stationary under the conditions above.
To do this we need to show that $2\z \in \partial f(\x)$ for $\z = (I-A^T A)\x + A^T \b$.
We first note that since $\|A\| < 1$ the vector $\x$ will be the global minimizer of \eqref{eq:vecrelax} for the fixed-cardinality relaxation, that is, the special case $g_i = 0$ if $i\leq k$ and $g_i=\infty$ if $i>k$.
This shows that $\x$ is stationary in \eqref{eq:vecrelax} for this particular choice of $g$. In particular $\x=D_s\pi \tx$ and $\z=D_s\pi \tz$ with the same $s$ and $\pi$.
(In the matrix case the corresponding statement is that the SVD's of $X$ and $Z$ have the same $U$ and $V$ matrices.)
Furthermore, since $\card(\x) \leq k$ and $g_i = 0$ when $i\leq k$ 
it is clear from Lemma~\ref{lemma:knownz} that the $\tz_i = \tx_i$ for $i \leq k$.

To show that $\x$ is stationary for a general choice of $g$ fulfilling \eqref{eq:gconst} it is enough to show that $\sqrt{g_i} \leq \tx_i$ for $i \leq k$ and $\sqrt{g_i} \geq \tx_k$ for $i > k$ by Lemma~\ref{lemma:knownz}. 
This is however implied by the stricter constraints~\eqref{eq:singvalsep1} and we therefore proceed by proving these directly.

First we show that $\tx_i$ is close to $\ty_i$. Since $\|A\x-\b\| \leq \|A\y-\b\| = \|\eps\|$ we have
\begin{equation} 
\sqrt{1-\delta_{2k}}\|\x-\y\|  \leq \|A(\x -\y)\|  \leq \|A\x-\b\| + \|A\y-\b\| \leq 2\|\eps\|. 
\end{equation}
Therefore 
\begin{equation}
|x_i-y_i| \leq \frac{2}{\sqrt{1-\delta_{2k}}}\|\eps\|.
\label{eq:xest}
\end{equation}
Furthermore
\begin{equation} 
\|\z-\x\|  = \|A^T A (\x - \y) - A^T \eps\|  \leq \|A^T\|\|A\|\|\x - \y\| + \|A^T\|\|\eps\|  \leq \|\x - \y\| + \|\eps\|  \leq \frac{3}{\sqrt{1-\delta_{2k}}}\|\eps\|. 
\end{equation}
And since $\tx_{k+1}=0$ this means that 
\begin{equation}
\tz_{k+1} \leq \frac{3}{\sqrt{1-\delta_{2k}}}\|\eps\|.
\label{eq:zest}
\end{equation}
Now inserting the above estimates in $\z_{k+1} < (1- 2 \delta_k)\z_k$ shows (after some simplification) that this constraint holds if
\begin{equation}
\ty_k > \frac{5 - 4\delta_k}{\sqrt{1-\delta_{2k}}(1-2\delta_{2k})} \|\eps\|,
\end{equation}
which is implied by \eqref{eq:yconst} since $\delta_k \geq \delta_{2k} > 0$.
Furthermore since $\sqrt{g}_i$ is non-decreasing \eqref{eq:gconst} and \eqref{eq:xest} implies that $\tz_i = \tx_i > \frac{\sqrt{g_k}}{1-\delta_k}$ for $i \leq k$, while 
\eqref{eq:gconst} and \eqref{eq:zest} implies that $\z_i < (1-\delta_k)\sqrt{g_i}$ for $i > k $.
\end{proof}

\end{document}